\def\y{{\bf y}}
\def\x{{\bf x}}
\def\x{{\mathbf x}}
\def\x{{\bf x}}
\def\y{{\bf y}}
\def\b{{\bf b}}
\def\h{{\bf h}}
\def\be{\begin{equation}}
\def\ee{\end{equation}}
\def\ba{\left[\begin{array}}
\def\ea{\end{array}\right]}
\def\t{{\bf t}}
\def\x{{\bf x}}
\def\y{{\bf y}}
\def\b{{\bf b}}
\def\1{{\bf 1}}
\def\g{{\bf g}}
\def\0{{\bf 0}}
\def\Sric{S_{ric}}
\newtheorem{theorem}{Theorem}
\newtheorem{corollary}{Corollary}
\newtheorem{lemma}{Lemma}
\begin{document}

\begin{singlespace}

\title {Bounds on restricted isometry constants of random matrices 
\footnote{ This work was supported in part by NSF grant \#CCF-1217857.}
}
\author{
\textsc{Mihailo Stojnic}
\\
\\
{School of Industrial Engineering}\\
{Purdue University, West Lafayette, IN 47907} \\
{e-mail: {\tt mstojnic@purdue.edu}} }
\date{}
\maketitle

\centerline{{\bf Abstract}} \vspace*{0.1in}

In this paper we look at isometry properties of random matrices. During the last decade these properties gained a lot attention in a field called compressed sensing in first place due to their initial use in \cite{CRT,CT}. Namely, in \cite{CRT,CT} these quantities were used as a critical tool in providing a rigorous analysis of $\ell_1$ optimization's ability to solve an under-determined system of linear equations with sparse solutions. In such a framework a particular type of isometry, called restricted isometry, plays a key role. One then typically introduces a couple of quantities, called upper and lower restricted isometry constants to characterize the isometry properties of random matrices. Those constants are then usually viewed as mathematical objects of interest and their a precise characterization is desirable. The first estimates of these quantities within compressed sensing were given in \cite{CRT,CT}. As the need for precisely estimating them grew further a finer improvements of these initial estimates were obtained in e.g. \cite{BCTsharp09,BT10}. These are typically obtained through a combination of union-bounding strategy and powerful tail estimates of extreme eigenvalues of Wishart (Gaussian) matrices (see, e.g. \cite{Edelman88}). In this paper we attempt to circumvent such an approach and provide an alternative way to obtain similar estimates.

\vspace*{0.25in} \noindent {\bf Index Terms: Restricted isometry constants; compressed sensing; $\ell_1$-minimization}.

\end{singlespace}

\section{Introduction}
\label{sec:back}

In this paper we look at isometry properties of random matrices. Our motivation comes from their initial employment for the analysis of $\ell_1$-optimization success in solving under-determined linear systems with sparse solutions. In \cite{CRT,CT} the following classic inverse linear problem was considered: consider a $m\times n$ system matrix $A$ with real components. Let $\tilde{\x}$ be a vector with no more than $k$ nonzero components (we will call such a vector $k$-sparse). Further let
\begin{equation}
\y=A\tilde{\x}.\label{eq:defy}
\end{equation}
Then one can pose the inverse problem: given $\y$ and $A$ can one then recover $\tilde{\x}$? The answer critically depends on the structure of $A$ and relations between $k$, $m$, and $n$. To avoid any special case we will assume that $A$ is always a full rank matrix and that $k<m<n$. Moreover, to simplify the exposition we will assume that $n$ is large and the so-called linear regime, i.e. we will assume that $k=\beta n$ and $m=\alpha n$ where $\beta$ and $\alpha$ are constants independent of $n$. It is then a relatively easy algebraic exercise to show that if $\beta<\alpha/2$ the solution to the above problem is unique and equal to $\tilde{\x}$. On the other hand if $\beta>\alpha/2$, roughly speaking, the ``odds" are pretty good that the solution is unique and equal to $\tilde{\x}$. Equipped with these algebraic facts one then faces the problem of actually figuring out what $\tilde{\x}$ really is, if $\y$ and $A$ from (\ref{eq:defy}) are given. That essentially (loosely speaking) boils down to finding the sparsest solution of the following under-determined system of linear equations
\begin{equation}
A\x=\y. \label{eq:system}
\end{equation}
The above problem is of course hard. Moreover it is a mathematical cornerstone of the field called compressed sensing that has seen an unprecedented expansion in recent years (way more about the compressed sensing conception and various problems of interest within the fields that grew out of the above mentioned  basic compressed sensing concept can be found in a tone of references; here we point out to a couple of introductory papers, e.g. \cite{DOnoho06CS,CRT}).

Looking back at (\ref{eq:system}), clearly one can consider an exhaustive search type of solution where one would look at all subsets of $k$ columns of $A$ and then attempt to solve the resulting system. However, in the linear regime that we assumed above such an approach becomes prohibitively slow as $n$ grows. That of course led in last several decades towards a search for more clever algorithms for solving (\ref{eq:system}). Many great algorithms were developed (especially during the last decade) and many of them have even provably excellent performance measures (see, e.g. \cite{JATGomp,JAT,NeVe07,DTDSomp,NT08,DaiMil08,DonMalMon09}). A particularly successful strategy is the following so-called $\ell_1$-optimization technique (variations of the standard $\ell_1$-optimization from e.g.
\cite{CWBreweighted,SChretien08,SaZh08}) as well as those from \cite{SCY08,FL08,GN03,GN04,GN07,DG08} related to $\ell_q$-optimization, $0<q<1$
are possible as well)
\begin{eqnarray}
\mbox{min} & & \|\x\|_{1}\nonumber \\
\mbox{subject to} & & A\x=\y. \label{eq:l1}
\end{eqnarray}
It has been known for a long time that the solution to the above problem is fairly often $\tilde{\x}$ in (\ref{eq:system}). It is however the work of \cite{CRT,CT,DOnoho06CS} that for the first time established it as a rigorous mathematical fact in a certain statistical scenario for the linear regime that we consider here  (more
on the non-linear regime, i.e. on the regime when $m$ is larger than
linearly proportional to $k$ can be found in e.g.
\cite{CoMu05,GiStTrVe06,GiStTrVe07}). On the path to establishing this fact \cite{CRT,CT} made a use of isometry properties of matrix $A$. Namely, they observed that if one looks at $k$- column subsets of $A$ and can somehow show that they typically behave as isometries one can then guarantee that the solution of (\ref{eq:l1}) is $\tilde{\x}$. To make the above description of such an observation more precise it is more convenient to define the following objects (for definitions of related, similar objects see, e.g. \cite{CRT,CT, Crip,BT10}):
\begin{eqnarray}
\xi_{uric}(\beta,\alpha) & = & \max_{\|\x\|_{2}=1,\|\x\|_{\ell_0}=k}\|A\x\|_2\nonumber \\
\xi_{lric}(\beta,\alpha) & = & \min_{\|\x\|_{2}=1,\|\x\|_{\ell_0}=k}\|A\x\|_2, \label{eq:ulric}
\end{eqnarray}
where $\|\x\|_{\ell_0}$ is the so-called $\ell_0$-norm which for all practical purposes counts how many nonzero components vector $\x$ has. Now, if one assumes that the columns of $A$ are normalized so that they all have unit Euclidean norm then how far away from $1$ are $\xi_{uric}(\beta,\alpha)$ and $\xi_{lric}(\beta,\alpha)$ is what determines how close $A$ is to satisfying restricted isometry properties. What was observed in \cite{CRT,CT} is essentially what kind of effect will deviation of $\xi_{uric}(\beta,\alpha)$ and $\xi_{lric}(\beta,\alpha)$ from $1$ have on the ability of (\ref{eq:l1}) to recover $\tilde{\x}$ from $\ref{eq:defy}$. All these things were of course rigorously quantified as well assuming a statistical scenario. In such a scenario matrix $A$ is often assumed to have appropriately scaled i.i.d. standard normal components. We will make a similar assumption throughout the rest of the paper as well (however, we do mention that our results are in no way restricted only to such matrices $A$; in fact we will briefly towards the end of the paper discuss the generality of the presented results as well). Namely, to ease the exposition we will assume that the elements of $A$ are i.i.d. standard normal components. Our goal will be to provide estimates for $\xi_{uric}(\beta,\alpha)$ and $\xi_{lric}(\beta,\alpha)$ in such a statistical scenario.

We should also mention that the restricted isometry properties that were considered in \cite{CRT,CT} are not the only way how one can analyze the ability of (\ref{eq:l1}) to recover $\tilde{\x}$ in (\ref{eq:system}). Namely, in \cite{DonohoPol,DonohoUnsigned}, an alternative approach based on high-dimensional ``random" geometry was presented. Moreover, such an approach was capable of providing the exact relations between $k$, $m$, and $n$ (essentially ($\beta,\alpha$) relations) so that (\ref{eq:l1}) typically in a statistical scenario recovers $\tilde{\x}$. In our own series of work \cite{StojnicICASSP09,StojnicCSetam09,StojnicUpper10,StojnicEquiv10}, we designed an alternative probabilistic approach that was also able to provide the exact ($\beta,\alpha$) relations so that (\ref{eq:l1}) typically in a statistical scenario recovers $\tilde{\x}$. However, for the purposes of this paper we believe that the analysis presented in \cite{CRT,CT} and later in \cite{BCTT09} is more relevant.

Of course before proceeding with the presentation of our main results, we should mention that after the original considerations in \cite{CRT,CT}, the restricted isometry properties have found a great deal of applications in various other studies related to linear inverse problems as well as in studies that viewed them as pure mathematical objects (see, e.g. \cite{Crip,CRT,Bar,Ver,ALPTJ09}). Along the same lines, we should mention that our motivation and interest come from the initial types of analysis used to study $\ell_1$-optimization properties. However, our presentation and contribution view them as purely mathematical objects and all results we present are a purely mathematical characterization of restricted isometry properties of random matrices $A$ (which essentially boils down to an as precise as possible estimate of $\xi_{uric}(\beta,\alpha)$ and $\xi_{lric}(\beta,\alpha)$ in (\ref{eq:ulric})). Of course there has been a great deal of work in recent years that provided solid estimates for $\xi_{uric}(\beta,\alpha)$ and $\xi_{lric}(\beta,\alpha)$. We should first mention that already in the introductory papers \cite{CRT,CT} pretty good estimates for $\xi_{uric}(\beta,\alpha)$ and $\xi_{lric}(\beta,\alpha)$ were provided. In those papers of course the primary goal was the analysis of (\ref{eq:l1}) and the estimates provided for $\xi_{uric}(\beta,\alpha)$ and $\xi_{lric}(\beta,\alpha)$ were more of an instructional nature. In \cite{BCTsharp09} and \cite{BT10} the strategy from \cite{CRT,CT} (based on a combination of union-bounding and fairly precise tail estimates of extreme eigenvalues of Wishart matrices) was refined and better (closer to $1$) values for $\xi_{uric}(\beta,\alpha)$ and $\xi_{lric}(\beta,\alpha)$ were obtained. We will throughout the paper recall on some of these results and will discuss them in more detail as we present our own. At this point, we would like to emphasize that the results that we will present will provide a fairly good set of estimates for both $\xi_{uric}(\beta,\alpha)$ and $\xi_{lric}(\beta,\alpha)$. However, rather then particular values, it is the mechanisms that we designed to obtained them that we believe are of particular value. Essentially, the framework that we designed attempts to circumvent the traditional union-boudning/Wishart extreme eigenvalues approach.

Before proceeding further we briefly mention how the rest of the paper is organized. In Section \ref{sec:xiu} we present a mechanism that can be used to provide an upper bound on $\xi_{uric}$ (from this point on we will fairly often instead of $\xi_{uric}(\beta,\alpha)$ and $\xi_{lric}(\beta,\alpha)$ write just $\xi_{uric}$ and $\xi_{lric}$, respectively). In Section \ref{sec:xiulow}, we provide a way to improve the results presented in Section \ref{sec:xiu} (this will rely on a substantial progress we recently made in studying various other combinatorial problems in e.g. \cite{StojnicLiftStrSec13,StojnicMoreSophHopBnds10}). In Section \ref{sec:xil} we then present a counterpart to the mechanism from Section \ref{sec:xiu} that can be used to provide a lower bound on $\xi_{lric}$. Along the same lines, we then in Section \ref{sec:xillift} provide a counterpart to the mechanism from Section \ref{sec:xiulow} that can be used to lift the lower bounds on $\xi_{lric}$. Finally in Section \ref{sec:conc} we present a brief discussion and provide a few concluding remarks related to the obtained results.

\section{Bounding $\xi_{uric}$}
\label{sec:xiu}

In this section we look at $\xi_{uric}$ and design a mechanism that can be used to upper-bound it. The mechanism will to an extent be related to the mechanism we presented in \cite{StojnicCSetam09} and used for the analysis of (\ref{eq:l1})'s ability to recover $\tilde{\x}$. Throughout the presentation in this and all subsequent sections we will consequently assume a substantial level of familiarity with many of the well-known results that relate to the performance characterization of (\ref{eq:l1}) (we will fairly often recall on many results/definitions that we established in \cite{StojnicCSetam09,StojnicLiftStrSec13}). We start by defining a set $\Sric$
\begin{equation}
\Sric=\{\x\in S^{n-1}| \quad \|\x\|_{\ell_0}=k\},\label{eq:defSric}
\end{equation}
where $S^{n-1}$ is the unit sphere in $R^n$. Then one can transform the first part of (\ref{eq:ulric}) in the following way
\begin{equation}
\xi_{uric}=\max_{\x\in\Sric}\|A\x\|_2.\label{eq:negham1}
\end{equation}
A very similar set of problems was considered in \cite{StojnicCSetam09,StojnicHopBnds10}. A powerful set of upper/lower bounds was established in \cite{StojnicCSetam09,StojnicHopBnds10} on various problems considered there. Here, using mechanism similar to those from \cite{StojnicCSetam09,StojnicHopBnds10} we will establish a similar set of upper bounds on $\xi_{uric}$. However, one should note that the structure of set $\Sric$ is somewhat different than the structure of sets considered in \cite{StojnicCSetam09,StojnicHopBnds10} and a careful approach will be needed to readapt the mechanisms from \cite{StojnicCSetam09,StojnicHopBnds10} to the problem we consider here. Also, the mechanisms of \cite{StojnicCSetam09,StojnicHopBnds10} were powerful enough to establish the concentration of quantities similar to $\xi_{uric}$. Moreover, these quantities concentrate around their mean values. It will therefore be enough for us to only view $E\xi_{uric}$. Below we present a way to create an upper-bound on the optimal value of $E\xi_{uric}$.

\subsection{Probabilistic approach to upper bounding $\xi_{uric}$}
\label{sec:xiuprob}

In this section we look at $E\xi_{uric}$ and design its an upper-bound. To do so we rely on the following lemma (which is a modified version of a similar lemma from \cite{StojnicHopBnds10} and, as mentioned in \cite{StojnicHopBnds10}, a direct application of Theorem $4$ from \cite{StojnicHopBnds10} proven in various forms and shapes in e.g. \cite{Gordon85,Slep62}):
\begin{lemma}
Let $A$ be an $m\times n$ matrix with i.i.d. standard normal components. Let $\g$ and $\h$ be $n\times 1$ and $m\times 1$ vectors, respectively, with i.i.d. standard normal components. Also, let $g$ be a standard normal random variable. Then
\begin{equation}
E(\max_{\x\in\Sric,\|\y\|_2=1}(\y^T A\x +\|\x\|_2 g))\leq E(\max_{\x\in\Sric,\|\y\|_2=1}(\|\x\|_2\g^T\y+\h^T\x)).\label{eq:posexplemma}
\end{equation}\label{lemma:posexplemma}
\end{lemma}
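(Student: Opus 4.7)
My plan is to derive the inequality as a direct instance of the Slepian/Gordon Gaussian comparison theorem referenced (Theorem~$4$ of \cite{StojnicHopBnds10}). Since $\x\in\Sric\subset S^{n-1}$ forces $\|\x\|_2=1$, every factor of $\|\x\|_2$ in (\ref{eq:posexplemma}) equals $1$ on the feasible set; those factors are kept in the statement only to expose the structural form of the bound. I then introduce two centered Gaussian processes on the compact index set $T=\{(\x,\y):\x\in\Sric,\;\|\y\|_2=1\}$,
\begin{eqnarray*}
X_{\x,\y} &=& \y^T A\x + \|\x\|_2\, g, \\
Y_{\x,\y} &=& \|\x\|_2\, \g^T\y + \h^T\x,
\end{eqnarray*}
so that the claim reads $E[\max_{T}X_{\x,\y}]\leq E[\max_{T}Y_{\x,\y}]$. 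It then suffices to verify the covariance hypotheses of the comparison theorem.

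The heart of the argument is a short covariance calculation. Using $E[A_{ij}A_{kl}]=\delta_{ik}\delta_{jl}$ together with the mutual independence of $A$, $g$, $\g$ and $\h$,
\begin{eqnarray*}
E[X_{\x,\y}X_{\x',\y'}] &=& (\y^T\y')(\x^T\x') + \|\x\|_2\|\x'\|_2, \\
E[Y_{\x,\y}Y_{\x',\y'}] &=& \|\x\|_2\|\x'\|_2\,(\y^T\y') + \x^T\x'.
\end{eqnarray*}
Imposing the unit-norm constraints $\|\x\|_2=\|\x'\|_2=1$ and subtracting produces the key factorization
$$E[X_{\x,\y}X_{\x',\y'}] - E[Y_{\x,\y}Y_{\x',\y'}] = (1-\x^T\x')(1-\y^T\y'),$$
whose right-hand side is nonnegative by Cauchy--Schwarz applied to the four unit vectors. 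Taking $(\x,\y)=(\x',\y')$ collapses the right side to $0$, so $X$ and $Y$ share variances on $T$; off the diagonal $X$ is always at least as correlated as $Y$.

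These are precisely the hypotheses under which the Slepian-type conclusion yields $E[\max_T X_{\x,\y}]\leq E[\max_T Y_{\x,\y}]$ (the less correlated process, $Y$, has the larger expected maximum), which is the lemma. The extension from the finite-index Slepian statement to the continuous compact set $T$ is a standard continuity/net argument and is absorbed into the cited theorem. The only genuinely substantive content is therefore the covariance identity together with its sign check; once in hand, the conclusion is mechanical. The step I would recheck most carefully is the matching of directions between the covariance inequality and the Slepian conclusion, since these inequalities are notoriously easy to flip by accident.
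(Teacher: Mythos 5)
Your proposal is correct and follows essentially the same route as the paper: both invoke the Slepian/Gordon Gaussian comparison theorem (Theorem $4$ of \cite{StojnicHopBnds10}, tracing to \cite{Gordon85,Slep62}). The paper simply asserts that the change of index set to $\Sric$ ``changes nothing,'' whereas you actually carry out the covariance computation, correctly obtaining $E[X_{\x,\y}X_{\x',\y'}]-E[Y_{\x,\y}Y_{\x',\y'}]=(1-\x^T\x')(1-\y^T\y')\ge 0$ with equality on the diagonal and correctly matching the direction of Slepian's conclusion — so your argument is a legitimate filling-in of the details the paper omits.
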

\begin{proof}
As mentioned above, the proof is a standard/direct application of Theorem $4$ from \cite{StojnicHopBnds10}. We skip the details and mention that the only difference between the proof one needs here and the one given in \cite{StojnicHopBnds10} is the structure of set $\Sric$. However, such a difference changes nothing in the remainder of the proof.
\end{proof}

Using results of Lemma \ref{lemma:posexplemma} we then have
\begin{multline}
E(\max_{\x\in\Sric} \|A\x\|_2) =E(\max_{\x\Sric,\|\y\|_2=1}(\y^T A\x +\|\x\|_2 g))\\\leq E(\max_{\x\in\Sric,\|\y\|_2=1}(\|\x\|_2\g^T\y+\h^T\x))=E\|\x\|_2\|\g\|_2+E\max_{\x\in\Sric}\h^T\x\leq \sqrt{m}+E\max_{\x\in\Sric}\h^T\x.\label{eq:poshopaftlemma2}
\end{multline}
Let $\bar{\h}$ be the vector of magnitudes of $\h$ sorted in nondecreasing order (of course, ties are broken arbitrarily). Then from (\ref{eq:poshopaftlemma2}) we have
\begin{equation}
E(\max_{\x\in\Sric} \|A\x\|_2)\leq \sqrt{m}+E\sqrt{\sum_{i=n-k+1}^{n}\bar{\h}_i}\leq \sqrt{m}+\sqrt{E\sum_{i=n-k+1}^{n}\bar{\h}_i}.\label{eq:poshopaftlemma3}
\end{equation}
Using the results of \cite{StojnicCSetam09} one then has
\begin{equation}
\lim_{n\rightarrow \infty}\frac{E(\max_{\x\in\Sric} \|A\x\|_2)}{\sqrt{m}}\leq 1+\sqrt{\lim_{n\rightarrow \infty}\frac{E\sum_{i=n-k+1}^{n}\bar{\h}_i^2}{\alpha n}}= 1+\frac{1}{\sqrt{\alpha}}\sqrt{\beta+\frac{2\mbox{erfinv}(1-\beta)}{\sqrt{\pi}e^{(\mbox{erfinv}(1-\beta))^2}}}.\label{eq:poshopaftlemma4}
\end{equation}
Connecting beginning and end of (\ref{eq:poshopaftlemma4}) we finally have an upper bound on $E\xi_{uric}$ (in a scaled more appropriate form),
\begin{equation}
\lim_{n\rightarrow \infty}\frac{E\xi_{uric}}{\sqrt{m}}=\frac{E(\max_{\x\in\Sric} \|A\x\|_2)}{\sqrt{m}} \leq 1+\frac{1}{\sqrt{\alpha}}\sqrt{\beta+\frac{2\mbox{erfinv}(1-\beta)}{\sqrt{\pi}e^{(\mbox{erfinv}(1-\beta))^2}}}.\label{eq:poshopubexp1}
\end{equation}

We summarize our results from this subsection in the following lemma.

\begin{lemma}
Let $A$ be an $m\times n$ matrix with i.i.d. standard normal components. Let $n$ be large and let $k=\beta n$, $m=\alpha n$, where $\beta,\alpha>0$ are constants independent of $n$. Let $\xi_{uric}$ be as in (\ref{eq:negham1}).
\begin{equation}
\lim_{n\rightarrow \infty}\frac{E\xi_{uric}}{\sqrt{m}}=\frac{E(\max_{\x\in\Sric} \|A\x\|_2)}{\sqrt{m}} \leq 1+\frac{1}{\sqrt{\alpha}}\sqrt{\beta+\frac{2\mbox{erfinv}(1-\beta)}{\sqrt{\pi}e^{(\mbox{erfinv}(1-\beta))^2}}}.\label{eq:uricexplemma}
\end{equation}
Moreover, let $\xi_{uric}^{(u)}$ be a quantity such that
\begin{equation}
1+\frac{1}{\sqrt{\alpha}}\sqrt{\beta+\frac{2\mbox{erfinv}(1-\beta)}{\sqrt{\pi}e^{(\mbox{erfinv}(1-\beta))^2}}}<\xi_{uric}^{(u)}.
\label{eq:conduriclemma}
\end{equation}
Then
\begin{eqnarray}
& & \lim_{n\rightarrow\infty}P(\max_{\x\in\Sric}(\|A\x\|_2)\leq \xi_{uric}^{(u)}\sqrt{m})\geq 1\nonumber \\
& \Leftrightarrow & \lim_{n\rightarrow\infty}P(\xi_{uric}\leq \xi_{uric}^{(u)}\sqrt{m})\geq 1 \nonumber \\
& \Leftrightarrow & \lim_{n\rightarrow\infty}P(\xi_{uric}^2\leq (\xi_{uric}^{(u)})^2 m)\geq 1. \label{eq:uricproblemma}
\end{eqnarray}
\label{lemma:uriclemma}
\end{lemma}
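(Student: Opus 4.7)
The plan is to combine the chain of inequalities already displayed in (\ref{eq:poshopaftlemma2})--(\ref{eq:poshopubexp1}) with a standard Gaussian concentration-of-measure argument, and then to observe that the three probabilistic statements in (\ref{eq:uricproblemma}) are equivalent essentially by construction.

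For the expectation bound (\ref{eq:uricexplemma}) itself I would simply re-assemble the derivation given in the paragraphs preceding the lemma: apply Lemma \ref{lemma:posexplemma} to replace the Gaussian process $\y^T A\x + \|\x\|_2 g$ by the decoupled comparison process $\|\x\|_2\g^T\y + \h^T\x$; decompose the resulting expectation into a piece bounded by $E\|\g\|_2 \leq \sqrt{m}$ and the order-statistics piece $E\max_{\x\in\Sric}\h^T\x = E\sqrt{\sum_{i=n-k+1}^{n}\bar{\h}_i^2}$; move the expectation inside the square root via Jensen's inequality; and finally invoke the asymptotic evaluation of $E\sum_{i=n-k+1}^{n}\bar{\h}_i^2$ established in \cite{StojnicCSetam09}. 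This produces exactly (\ref{eq:uricexplemma}).

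For the probabilistic statement, the key observation is that $\xi_{uric}$ is a $1$-Lipschitz function of $A$ in the Frobenius norm: for every $\x\in\Sric$ the map $A\mapsto\|A\x\|_2$ is $1$-Lipschitz (operator norm bounded by Frobenius norm on the $m\times n$ matrix $A_1-A_2$), and a pointwise supremum of $1$-Lipschitz functions inherits the same constant. The Borell--TIS Gaussian concentration inequality then gives
$$
P(|\xi_{uric} - E\xi_{uric}|>t) \leq 2e^{-t^2/2}.
$$
The strict inequality (\ref{eq:conduriclemma}) combined with (\ref{eq:uricexplemma}) produces, for all sufficiently large $n$, a gap of the form $\xi_{uric}^{(u)}\sqrt{m} - E\xi_{uric} \geq \epsilon\sqrt{n}$ with $\epsilon>0$ independent of $n$; choosing $t=\epsilon\sqrt{n}$ sends the tail to zero and establishes the first line of (\ref{eq:uricproblemma}). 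The second line is just a rewriting via (\ref{eq:negham1}), and the third follows because squaring is order-preserving on nonnegative quantities. The only step requiring real care is converting the asymptotic expectation bound into a uniform finite-$n$ gap that can be absorbed by the concentration tail; everything else is essentially a packaging of the derivation already presented in Section \ref{sec:xiu}.
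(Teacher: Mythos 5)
Your proposal is correct and matches the paper's approach: the expectation bound (\ref{eq:uricexplemma}) is obtained by re-assembling the chain (\ref{eq:poshopaftlemma2})--(\ref{eq:poshopubexp1}) exactly as you describe, and the probabilistic part rests on Gaussian concentration. The only difference is presentational: the paper simply cites the concentration machinery from \cite{StojnicCSetam09} and \cite{StojnicHopBnds10}, whereas you make the mechanism explicit (each $A\mapsto\|A\x\|_2$ on unit vectors is $1$-Lipschitz in Frobenius norm, so the supremum $\xi_{uric}$ is as well, and Borell--TIS gives a sub-Gaussian tail around $E\xi_{uric}$); your remark that one must first pass from the limiting bound (\ref{eq:uricexplemma}) plus the strict inequality (\ref{eq:conduriclemma}) to a finite-$n$ gap of order $\sqrt{n}$ before applying the tail estimate is precisely the small bookkeeping step implicit in the paper's citation, and you handle it correctly.
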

\begin{proof}
The proof of (\ref{eq:uricexplemma}) follows from (\ref{eq:poshopubexp1}) and the above discussion. The proof of the moreover part follows from the concentration properties considered in \cite{StojnicCSetam09} and the corresponding discussion presented in \cite{StojnicHopBnds10}.
\end{proof}

\subsection{Numerical results -- upper bound on $\xi_{uric}$}
\label{sec:xiuprobnum}

In this subsection we present a small collection of numerical results one can obtain based on Lemma \ref{lemma:uriclemma}. In Tables \ref{tab:urictab1} and \ref{tab:urictab2} we essentially show the upper bounds on $\lim_{n\rightarrow\infty}\frac{E\xi_{uric}}{\sqrt{m}}$ one can obtain based on the above lemma. We refer to those bounds as $\xi_{uric}^{(u)}$. Also, to get a feeling how far off they could be from the optimal ones we also show a set of known bounds from \cite{BT10} (based on numerical experiments conducted in \cite{BT10} those appeared as if not that far away from the optimal values). While there are other ways that can be used to compute bounds on
$\lim_{n\rightarrow\infty}\frac{E\xi_{uric}}{\sqrt{m}}$, we chose to present the results obtained through the concepts developed in \cite{BT10} for two reasons: 1) the calculations behind these bounds are fairly simple and 2) the main idea behind their construction is very neat (alternatively one can also look at the results from e.g. \cite{CRT,CT,BCTsharp09}; the results from \cite{BT10} however provide lower values of the upper bounds; for a detailed discussion how the results from \cite{CRT,CT,BCTsharp09,BT10} relate to each other we refer to \cite{BT10}). We denote the upper bounds on $\lim_{n\rightarrow\infty}\frac{E\xi_{uric}}{\sqrt{m}}$ that one can obtain based on \cite{BT10} as $\xi_{uric}^{BT}$. Also, we do mention that the values presented in Tables \ref{tab:urictab1} and \ref{tab:urictab2} are slightly modified versions of the corresponding quantities from \cite{BT10}. Namely, to get a complete agreement with \cite{BT10} one should think of ${\cal U}$ in \cite{BT10} as $(\xi_{uric}^{BT})^2-1$ (or in other words, what we call $\xi_{uric}^{BT}$ in \cite{BT10} is called $\lambda^{max}$). Overall, the results obtained based on Lemma \ref{lemma:uriclemma} improve a bit on those from \cite{BT10} and the improvement becomes more visible as ratio $\beta/\alpha$ grows.

\begin{table}
\caption{Upper bounds on $\lim_{n\rightarrow\infty}\frac{E\xi_{uric}}{\sqrt{m}}$ -- low $\beta/\alpha\leq 0.5$ regime}\vspace{.1in}
\hspace{-0in}\centering
\begin{tabular}{||c|c|c|c|c|c||}\hline\hline
 $\alpha$       & $0.1$ & $0.3$ & $0.5$ & $0.7$ & $0.9$  \\ \hline\hline
 $\beta/\alpha=0.1$; $\xi_{uric}^{BT}$       & $1.9786$ & $1.8970$ & $1.8562$ & $1.8280$ & $1.8062$ \\ \hline
 $\beta/\alpha=0.1$; $\xi_{uric}^{(u)}$      & $1.9192$ & $1.8049$ & $1.7471$ & $1.7071$ & $1.6761$ \\ \hline\hline
 $\beta/\alpha=0.3$; $\xi_{uric}^{BT}$       & $2.5822$ & $2.4067$ & $2.3142$ & $2.2471$ & $2.1925$ \\ \hline
 $\beta/\alpha=0.3$; $\xi_{uric}^{(u)}$      & $2.3941$ & $2.1710$ & $2.0560$ & $1.9753$ & $1.9123$ \\ \hline\hline
 $\beta/\alpha=0.5$; $\xi_{uric}^{BT}$       & $2.9622$ & $2.7036$ & $2.5591$ & $2.4479$ & $2.3508$ \\ \hline
 $\beta/\alpha=0.5$; $\xi_{uric}^{(u)}$      & $2.6706$ & $2.3633$ & $2.2030$ & $2.0901$ & $2.0017$ \\ \hline\hline
\end{tabular}
\label{tab:urictab1}
\end{table}

\begin{table}
\caption{Upper bounds on $\lim_{n\rightarrow\infty}\frac{E\xi_{uric}}{\sqrt{m}}$ -- high $\beta/\alpha> 0.5$ regime}\vspace{.1in}
\hspace{-0in}\centering
\begin{tabular}{||c|c|c|c|c|c||}\hline\hline
 $\alpha$       & $0.1$ & $0.3$ & $0.5$ & $0.7$ & $0.9$  \\ \hline\hline
 $\beta/\alpha=0.7$; $\xi_{uric}^{BT}$        & $3.2505$ & $2.9094$ & $2.7053$ & $2.5337$ & $2.3769$ \\ \hline
 $\beta/\alpha=0.7$; $\xi_{uric}^{(u)}$       & $2.8709$ & $2.4898$ & $2.2898$ & $2.1489$ & $2.0394$ \\ \hline\hline
 $\beta/\alpha=0.9$; $\xi_{uric}^{BT}$        & $3.4849$ & $3.0577$ & $2.7779$ & $2.5385$ & $2.3769$ \\ \hline
 $\beta/\alpha=0.9$; $\xi_{uric}^{(u)}$       & $3.0283$ & $2.5801$ & $2.3440$ & $2.1785$ & $2.0522$ \\ \hline\hline
\end{tabular}
\label{tab:urictab2}
\end{table}

\section{Lowering $\xi_{uric}$'s bounds}
\label{sec:xiulow}

In the previous section we presented a fairly powerful method for estimating $\xi_{uric}$. However, the results we obtained are not exact. Of course, the main reason is an inability to determine the exact value of $E\xi_{uric}$. Instead we resorted to its upper bounds and those could be loose. In this section we will use some of
the ideas we recently introduced in \cite{StojnicLiftStrSec13,StojnicMoreSophHopBnds10} to provide a substantial conceptual improvement in these bounds
which would in turn reflect even in practically better estimates for $E\xi_{uric}$ (as we will see later on, similar concepts will be employed to deal with $E\xi_{lric}$ and practical improvement in those cases will be even more substantial). Below we recall on the main components of the mechanisms introduced in \cite{StojnicMoreSophHopBnds10,StojnicLiftStrSec13} and how these can be adapted to be of use when dealing with problems of interest here.

\subsection{Probabilistic approach to lowering $\xi_{uric}$'s bounds}
\label{sec:xiulowprob}

We start by introducing a lemma very similar to the one considered in \cite{StojnicMoreSophHopBnds10} (the following lemma is essentially a direct consequence/application of Theorem $1$ from \cite{StojnicMoreSophHopBnds10} which of course was proved in \cite{Gordon85} and in a slightly different form earlier in \cite{Slep62}).
\begin{lemma}
Let $A$ be an $m\times n$ matrix with i.i.d. standard normal components. Let $\g$ and $\h$ be $n\times 1$ and $m\times 1$ vectors, respectively, with i.i.d. standard normal components. Also, let $g$ be a standard normal random variable and let $c_3$ be a positive constant. Then
\begin{equation}
E(\max_{\x\in\Sric,\|\y\|_2=1}e^{c_3(\y^T A\x + g)})\leq E(\max_{\x\in\Sric,\|\y\|_2=1}e^{c_3(\g^T\y+\h^T\x)}).\label{eq:posexplemmalift}
\end{equation}\label{lemma:posexplemmalift}
\end{lemma}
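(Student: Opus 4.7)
The plan is to mimic the proof of Lemma \ref{lemma:posexplemma} but invoke the distributional form of the Slepian/Gordon comparison (Theorem $1$ of \cite{StojnicMoreSophHopBnds10}, itself a restatement of \cite{Slep62,Gordon85}) instead of the expectation-only Sudakov--Fernique form. I would introduce two centered Gaussian processes on the compact index set $\Sric\times S^{m-1}$:
\begin{equation*}
X_{\x,\y}=\y^T A\x+g,\qquad Y_{\x,\y}=\g^T\y+\h^T\x .
\end{equation*}

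The crux is a one-line covariance computation. Using independence of $A,g,\g,\h$ and i.i.d.\ standard normality of their entries, one finds $E[X_{\x_1,\y_1}X_{\x_2,\y_2}]=(\y_1^T\y_2)(\x_1^T\x_2)+1$ and $E[Y_{\x_1,\y_1}Y_{\x_2,\y_2}]=\y_1^T\y_2+\x_1^T\x_2$. Since $\|\x_i\|_2=\|\y_i\|_2=1$ on the index set, both processes have variance $2$, and the covariance difference factors:
\begin{equation*}
E[X_{\x_1,\y_1}X_{\x_2,\y_2}]-E[Y_{\x_1,\y_1}Y_{\x_2,\y_2}]=(1-\y_1^T\y_2)(1-\x_1^T\x_2)\geq 0,
\end{equation*}
because the inner product of two unit vectors is at most $1$. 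So the $X$-process is everywhere at least as strongly correlated as the $Y$-process, with matching variances on the diagonal.

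Applying the Slepian/Gordon distributional comparison then yields $\Prob(\max_{\x,\y}X_{\x,\y}>t)\leq \Prob(\max_{\x,\y}Y_{\x,\y}>t)$ for every real $t$, i.e.\ stochastic domination of $\max X$ by $\max Y$. Consequently $E\phi(\max X)\leq E\phi(\max Y)$ for every nondecreasing $\phi$. Choosing $\phi(z)=e^{c_3 z}$ (nondecreasing since $c_3>0$) and using that $\max_{\x,\y}e^{c_3 Z_{\x,\y}}=e^{c_3\max_{\x,\y}Z_{\x,\y}}$ for any process $Z$ when $c_3>0$, the claim (\ref{eq:posexplemmalift}) follows directly.

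I do not expect a serious technical obstacle. Compactness of $\Sric\times S^{m-1}$ (a finite union of products of spheres) handles measurability and the standard approximation from the finite-index version of the Slepian inequality. The one pitfall to watch is the orientation of the comparison: I would double-check the sign of $(1-\y_1^T\y_2)(1-\x_1^T\x_2)$ together with the convention that the process with larger covariances has the smaller max, so that the inequality comes out in the direction required by (\ref{eq:posexplemmalift}).
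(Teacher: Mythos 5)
Your proposal is correct and follows essentially the same route as the paper: both reduce to the Slepian/Gordon comparison (what the paper calls Theorem~1 of \cite{StojnicMoreSophHopBnds10}), applied to the pair of processes $\y^T A\x+g$ and $\g^T\y+\h^T\x$ indexed over $\Sric\times S^{m-1}$, where the unit-sphere constraints make the variances coincide and the covariance gap factor as $(1-\y_1^T\y_2)(1-\x_1^T\x_2)\geq 0$. You merely make explicit the covariance verification and the passage from the distributional form of Slepian to the exponential-moment inequality (via stochastic domination of the maximum and monotonicity of $z\mapsto e^{c_3 z}$), whereas the paper absorbs all of this into the citation and the remark that the structure of $\Sric$ changes nothing.
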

\begin{proof}
As mentioned above, the proof is a standard/direct application of Theorem $1$ from \cite{StojnicMoreSophHopBnds10} which was proved in \cite{Gordon85} and in a slightly different form earlier in \cite{Slep62}. The only difference is the structure of $\Sric$ which changes nothing in the proof.
\end{proof}

Following what was done in \cite{StojnicMoreSophHopBnds10} one then has
\begin{equation}
E(\max_{\x\in\Sric}\|A\x\|_2)\leq
-\frac{c_3}{2}+\frac{1}{c_3}\log(E(\max_{\x\in\Sric}(e^{c_3\h^T\x})))
+\frac{1}{c_3}\log(E(\max_{\|\y\|_2=1}(e^{c_3\g^T\y}))).\label{eq:chpos8uriclift}
\end{equation}
Let $c_3=c_3^{(s)}\sqrt{n}$ where $c_3^{(s)}$ is a constant independent of $n$. Then following further what we did in \cite{StojnicMoreSophHopBnds10} we have
\begin{equation}
\frac{E(\max_{\x\in\Sric}\|A\x\|_2)}{\sqrt{n}}
 \leq
-\frac{c_3^{(s)}}{2}+\frac{1}{nc_3^{(s)}}\log(E(\max_{\x\in\Sric}(e^{c_3^{(s)}\sqrt{n}\h^T\x})))
+\frac{1}{nc_3^{(s)}}\log(E(\max_{\|\y\|_2=1}(e^{c_3^{(s)}\sqrt{n}\g^T\y}))),
\end{equation}
or written slightly differently
\begin{eqnarray}
\sqrt{\alpha}\frac{E(\max_{\x\in\Sric}\|A\x\|_2)}{\sqrt{m}}
& \leq &
-\frac{c_3^{(s)}}{2}+\frac{1}{nc_3^{(s)}}\log(E(\max_{\x\in\Sric}(e^{c_3^{(s)}\sqrt{n}\h^T\x})))
+\frac{1}{nc_3^{(s)}}\log(E(\max_{\|\y\|_2=1}(e^{c_3^{(s)}\sqrt{n}\g^T\y}))) \nonumber \\
& = & -\frac{c_3^{(s)}}{2}+I_{uric}(c_3^{(s)},\beta)
+I_{sph}(c_3^{(s)},\alpha),\label{eq:chpos9uriclift}
\end{eqnarray}
where
\begin{eqnarray}
I_{uric}(c_3^{(s)},\beta) & = & \frac{1}{nc_3^{(s)}}\log(E(\max_{\x\in\Sric}(e^{c_3^{(s)}\sqrt{n}\h^T\x})))\nonumber \\
I_{sph}(c_3^{(s)},\alpha) & = & \frac{1}{nc_3^{(s)}}\log(E(\min_{\|\y\|_2=1}(e^{c_3^{(s)}\sqrt{n}\g^T\y}))).\label{eq:defIs}
\end{eqnarray}
In \cite{StojnicMoreSophHopBnds10} we also established the following
\begin{equation}
I_{sph}(c_3^{(s)},\alpha)=\frac{1}{nc_3^{(s)}}\log(Ee^{c_3^{(s)}\sqrt{n}\|\g\|_2})
\doteq\widehat{\gamma^{(s)}}-\frac{\alpha}{2c_3^{(s)}}\log(1-\frac{c_3^{(s)}}{2\widehat{\gamma^{(s)}}})),\label{eq:gamaiden2}
\end{equation}
where (following \cite{SPH}) $\doteq$ stands for equality that holds as $n\rightarrow \infty$ and
\begin{equation}
\widehat{\gamma^{(s)}}=\frac{2c_3^{(s)}+\sqrt{4(c_3^{(s)})^2+16\alpha}}{8}.\label{eq:gamaiden3}
\end{equation}
We also mention that (as in \cite{StojnicMoreSophHopBnds10}) $\doteq$ can be replaced with a trivial inequality $\leq$ for our needs here.

To make the bound in (\ref{eq:chpos9uriclift}) operational, the only thing left to consider is $I_{uric}(c_3^{(s)},\beta)$. We will now naturally switch to consideration of $I_{uric}(c_3^{(s)},\beta)$. However to make the presentation easier to follow first we slightly modify set $\Sric$ in the following way:
\begin{equation}
\Sric=\{\x\in S^{n-1}| \quad \x_i=\b_i\x_i^{\prime},\sum_{i=1}^n \b_i=k,\b_i\in\{0,1\},\b_i=0\Rightarrow\x_i^{\prime}=0,\|\x^{\prime}\|_2=1\},\label{eq:defSriclift}
\end{equation}
where $S^{n-1}$ is the unit sphere in $R^n$. Let $f(\x)=\h^T\x$ and
we start with the following line of identities
\begin{eqnarray}
f_{uric}=\max_{\x\in\Sric}f(\x)=-\min_{\x\in\Sric}-\h^T\x =  -\min_{\b,\x} & & -\h^T\x\nonumber \\
\mbox{subject to} & & \x_i=\b_i\x_i^{\prime},1\leq i\leq n\nonumber \\
& & \|\x_i^{\prime}\|_2^2=1,\nonumber \\
& & \b_i\in\{0,1\},1\leq i\leq n,\nonumber \\
& & \sum_{i=1}^{n}\b_i= k, \nonumber \\
& & \b_i=0\Rightarrow \x_i^{\prime}=0,1\leq i\leq n.\label{eq:uriceq0}
\end{eqnarray}
Let $\phi_i=(\b_i=0\Rightarrow \x_i^{\prime}=0),1\leq i\leq n$. We then further have
\begin{eqnarray}
f_{uric} & = & -\min_{\b_i\in\{0,1\},\phi_i,\x^{\prime}}\max_{\gamma_{uric},\nu_{uric}\geq 0} -\sum_{i=1}^{n}\h_i\b_i\x_i^{\prime}
+\nu_{uric}\sum_{i=1}^{n}\b_i-\nu_{uric}k+\gamma_{uric}\sum_{i=1}^{n} (\x_i^{\prime})^2-\gamma_{uric}\nonumber \\
&\leq & -\max_{\gamma_{uric},\nu_{uric}\geq 0}\min_{\b_i\in\{0,1\},\phi_i,\x^{\prime}} -\sum_{i=1}^{n}\h_i\b_i\x_i^{\prime}
+\nu_{uric}\sum_{i=1}^{n}\b_i-\nu_{uric}k+\gamma_{uric}\sum_{i=1}^{n} (\x_i^{\prime})^2-\gamma_{uric}\nonumber \\
& = & \min_{\gamma_{uric},\nu_{uric}\geq 0}\max_{\b_i\in\{0,1\},\phi_i,\x^{\prime}} \sum_{i=1}^{n}\h_i\b_i\x_i^{\prime}
-\nu_{uric}\sum_{i=1}^{n}\b_i+\nu_{uric}k-\gamma_{uric}\sum_{i=1}^{n} (\x_i^{\prime})^2+\gamma_{uric}\nonumber \\
& = & \min_{\gamma_{uric},\nu_{uric}\geq 0} \sum_{i=1}^{n}\t_i+\nu_{uric}k+\gamma_{uric},
\label{eq:uriceq01}
\end{eqnarray}
where
\begin{equation}
\t_i=\max\{\frac{\h_i^2}{4\gamma_{uric}}-\nu_{uric},0\}.\label{eq:deftiuriclift}
\end{equation}
Positivity condition on $\nu_{uric}$ is added although it is not necessary (it essentially amount to relaxing the last constraint to an inequality which changes nothing with respect to the final results). Although we showed an inequality on $f_{uric}$ (which is sufficient for what we need here) we do mention that the above actually holds with the equality. Let
\begin{equation}
f_1^{(uric)}(\h,\gamma_{uric},\nu_{uric},\beta)=\sum_{i=1}^{n} \t_i.\label{eq:deff1uriclift}
\end{equation}
Then
\begin{multline}
\hspace{-.3in}I_{uric}(c_3^{(s)},\beta)  =  \frac{1}{nc_3^{(s)}}\log(E(\max_{\x\in\Sric}(e^{c_3^{(s)}\sqrt{n}\h^T\x}))) = \frac{1}{nc_3^{(s)}}\log(E(\max_{\x\in\Sric}(e^{c_3^{(s)}\sqrt{n}f(\x))})))\\=\frac{1}{nc_3^{(s)}}\log(Ee^{c_3^{(s)}\sqrt{n}\min_{\gamma_{uric},\nu_{uric}\geq 0}(f_1^{(uric)}(\h,\gamma_{uric},\nu_{uric},\beta)+\nu_{uric}k+\gamma_{uric})})\\
\doteq \frac{1}{nc_3^{(s)}}\min_{\gamma_{uric},\nu_{uric}\geq 0}\log(Ee^{c_3^{(s)}\sqrt{n}(f_1^{(uric)}(\h,\gamma_{uric},\nu_{uric},\beta)+\nu_{uric}k+\gamma_{uric})})\\
=\min_{\gamma_{uric},\nu_{uric}\geq 0}(\nu_{uric}\sqrt{n}\beta+ \frac{\gamma_{uric}}{\sqrt{n}}+\frac{1}{nc_3^{(s)}}\log(Ee^{c_3^{(s)}\sqrt{n}(f_1^{(uric)}(\h,\gamma_{uric},\nu_{uric},\beta))}))\\
=\min_{\gamma_{uric},\nu_{uric}\geq 0}(\nu_{uric}\sqrt{n}\beta+ \frac{\gamma_{uric}}{\sqrt{n}}+\frac{1}{nc_3^{(s)}}\log(Ee^{c_3^{(s)}(\sum_{i=1}^{n}\t_i)})),\label{eq:gamaiden1uriclift}
\end{multline}
where $\t_i$ is as given in (\ref{eq:deftiuriclift}) and as earlier, $\doteq$ stands for equality when $n\rightarrow \infty$ and would be obtained through the mechanism presented in \cite{SPH} (as discussed in \cite{SPH}, for our needs here though, even just replacing $\doteq$ with a simple $\leq$ inequality suffices). Now if one sets $\gamma_{uric}=\gamma_{uric}^{(s)}\sqrt{n}$ and $\nu_{uric}^{(s)}=\nu_{uric}\sqrt{n}$ then (\ref{eq:gamaiden1uriclift}) gives
\begin{eqnarray}
I_{uric}(c_3^{(s)},\beta)
& = & \min_{\gamma_{uric},\nu_{uric}\geq 0}(\nu_{uric}\sqrt{n}\beta+ \frac{\gamma_{uric}}{\sqrt{n}}+\frac{1}{nc_3^{(s)}}\log(Ee^{c_3^{(s)}(\sum_{i=1}^{n}\t_i)}))\nonumber \\
& = &
\min_{\gamma_{uric}^{(s)},\nu_{uric}^{(s)}\geq 0}(\nu_{uric}^{(s)}\beta+ \gamma_{uric}^{(s)}+\frac{1}{c_3^{(s)}}\log(Ee^{c_3^{(s)}\t_i^{(s)}})),\label{eq:gamaiden2uric}
\end{eqnarray}
where
\begin{equation}
\t_i^{(s)}=\max\{\frac{\h_i^2}{4\gamma_{uric}^{(s)}}-\nu_{uric}^{(s)},0\},\label{eq:deftisuriclift}
\end{equation}
or in other words
\begin{equation}
\t_i^{(s)}=\begin{cases}\frac{\h_i^2}{4\gamma_{uric}^{(s)}}-\nu_{uric}^{(s)}, & |\h_i|\geq 2\sqrt{\gamma_{uric}^{(s)}\nu_{uric}^{(s)}}\\
0, & |\h_i|\leq 2\sqrt{\gamma_{uric}^{(s)}\nu_{uric}^{(s)}}\end{cases}.
\label{eq:deftis1uriclift}
\end{equation}
The above characterization is then sufficient to compute upper bounds on $E\xi_{uric}$. However, since there is a bit of numerical work involved it is probably more convenient to look for a neater representation. That obviously involves solving several integrals. We skip such a tedious job but present the final results. We start with assuming (to insure the integrals convergence) $\gamma_{uric}^{(s)}>\frac{c_3^{(s)}}{2}$ and
setting
\begin{equation}
I^{(uric)}=Ee^{c_3^{(s)}\t_i^{(s)}}\label{eq:defbigIuric}
\end{equation}
and
\begin{eqnarray}
p_{uric} & = & c_3^{(s)}/4/\gamma_{uric}^{(s)}\nonumber \\
r_{uric} & = & -c_3^{(s)}\nu_{uric}^{(s)}\nonumber \\
C_{uric} & = & e^{r_{uric}}/\sqrt{1-2p_{uric}}\nonumber \\
a_{uric} & = & 2\sqrt{\nu_{uric}^{(s)}\gamma_{uric}^{(s)}}\sqrt{1-2p_{uric}}.\label{defhelpbigIuric}
\end{eqnarray}
Then one has
\begin{equation}
I^{(uric)}=Ee^{c_3^{(s)}\t_i^{(s)}}=C_{uric}\mbox{erfc}(a_{uric}/\sqrt{2})+(1-\mbox{erfc}(\sqrt{2\nu_{uric}^{(s)}\gamma_{uric}^{(s)}})),\label{eq:defbigIuric1}
\end{equation}
which in combination with (\ref{eq:gamaiden1uriclift}) is then enough to compute the upper bounds on $E\xi_{uric}$.

We summarize the above results related to the upper bound of $E\xi_{uric}$ in the following theorem.

\begin{theorem}($E\xi_{uric}$ - lowered upper bound)
Let $A$ be an $m\times n$ matrix with i.i.d. standard normal components. Let $k,m,n$ be large
and let $\alpha=\frac{m}{n}$ and $\beta=\frac{k}{n}$ be constants
independent of $m$ and $n$. Further, let $\Sric$ be as defined in (\ref{eq:defSric}) (or in (\ref{eq:defSriclift})). Let $\mbox{erf}$ be the standard error function associated with zero-mean unit variance Gaussian random variable and let $\mbox{erfc}=1-\mbox{erf}$.
Let
\begin{equation}
\widehat{\gamma_{sph}^{(s)}}=\frac{2c_3^{(s)}+\sqrt{4(c_3^{(s)})^2+16\alpha}}{8},\label{eq:gamasphthmuriclift}
\end{equation}
and
\begin{equation}
I_{sph}(c_3^{(s)},\alpha)=
\left ( \widehat{\gamma_{sph}^{(s)}}-\frac{\alpha}{2c_3^{(s)}}\log(1-\frac{c_3^{(s)}}{2\widehat{\gamma_{sph}^{(s)}}}\right ).\label{eq:Isphthmuriclift}
\end{equation}
Further, let $c_3^{(s)}$ an d$\gamma_{uric}^{(s)}$ be such that $\frac{c_3^{(s)}}{4\gamma_{uric}^{(s)}}<\frac{1}{2}$. Also,
let $I^{(uric)}$ be defined through (\ref{eq:defbigIuric})-(\ref{eq:defbigIuric1}) and let
\begin{equation}
I_{uric}(c_3^{(s)},\beta)=\min_{\gamma_{uric}^{(s)}\geq c_3^{(s)}/2,\nu_{uric}^{(s)}\geq 0}(\nu_{uric}^{(s)}\beta+ \gamma_{uric}^{(s)}+\frac{1}{c_3^{(s)}}\log(I^{(uric)})).\label{eq:Iuricthmuric}
\end{equation}
Then
\begin{equation}
\lim_{n\rightarrow\infty}\frac{E\xi_{uric}}{\sqrt{m}}=\lim_{n\rightarrow\infty}\frac{E(\max_{\x\in\Sric}\|A\x\|_2)}{\sqrt{m}}
\leq
\frac{1}{\sqrt{\alpha}}\min_{c_3^{(s)}\geq 0}\left (-\frac{c_3^{(s)}}{2}+I_{uric}(c_3^{(s)},\beta_{str})+I_{sph}(c_3^{(s)},\alpha)\right ).\label{eq:ubthmuriclift}
\end{equation}
Moreover, let $\xi_{uric}^{(u,low)}$ be a quantity such that
\begin{equation}
\frac{1}{\sqrt{\alpha}}\min_{c_3^{(s)}\geq 0}\left (-\frac{c_3^{(s)}}{2}+I_{uric}(c_3^{(s)},\beta_{str})+I_{sph}(c_3^{(s)},\alpha)\right )
<\xi_{uric}^{(u,low)}.
\label{eq:conduricthm}
\end{equation}
Then
\begin{eqnarray}
& & \lim_{n\rightarrow\infty}P(\max_{\x\in\Sric}(\|A\x\|_2)\leq \xi_{uric}^{(u,low)}\sqrt{m})\geq 1\nonumber \\
& \Leftrightarrow & \lim_{n\rightarrow\infty}P(\xi_{uric}\leq \xi_{uric}^{(u,low)}\sqrt{m})\geq 1 \nonumber \\
& \Leftrightarrow & \lim_{n\rightarrow\infty}P(\xi_{uric}^2\leq (\xi_{uric}^{(u,low)})^2 m)\geq 1. \label{eq:uricprobthm}
\end{eqnarray}
\label{thm:thmuriclift}
\end{theorem}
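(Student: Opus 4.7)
The plan is to derive \eqref{eq:ubthmuriclift} by applying the exponential Gordon--Slepian comparison (Lemma \ref{lemma:posexplemmalift}) to the index set $\Sric$ and then performing a scalar minimax calculation, and to derive \eqref{eq:uricprobthm} by Gaussian Lipschitz concentration.

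First I would start from Lemma \ref{lemma:posexplemmalift}. Writing $\max_{\|\y\|_2=1}\y^T A\x=\|A\x\|_2$, using independence of the auxiliary $\g,\h$, and pulling out the independent Gaussian $g$ (which contributes $Ee^{c_3 g}=e^{c_3^2/2}$), the inequality becomes
\begin{equation*}
e^{c_3^2/2}\,E\max_{\x\in\Sric}e^{c_3\|A\x\|_2}\ \leq\ E\max_{\|\y\|_2=1}e^{c_3\g^T\y}\cdot E\max_{\x\in\Sric}e^{c_3\h^T\x}.
\end{equation*}
One application of Jensen on the left ($E e^{c_3 M}\geq e^{c_3 EM}$) followed by taking logs and dividing by $c_3=c_3^{(s)}\sqrt{n}$ produces \eqref{eq:chpos8uriclift} and, after dividing by $\sqrt{m}$, the decomposition \eqref{eq:chpos9uriclift} into $I_{uric}+I_{sph}$. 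The spherical piece $I_{sph}(c_3^{(s)},\alpha)$ is handled by invoking the closed-form evaluation \eqref{eq:Isphthmuriclift}--\eqref{eq:gamasphthmuriclift} from \cite{StojnicMoreSophHopBnds10}, where the $\doteq$ can be weakened to $\leq$ via a routine large-deviations bound, which is all we need.

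The substantive part is $I_{uric}(c_3^{(s)},\beta)$. Using the expanded parametrization \eqref{eq:defSriclift} of $\Sric$, I would dualize the cardinality constraint $\sum_i\b_i=k$ with multiplier $\nu_{uric}\geq 0$ and the unit-norm constraint $\sum_i(\x_i')^2=1$ with multiplier $\gamma_{uric}\geq 0$. Weak duality gives the chain of inequalities in \eqref{eq:uriceq01}, and the inner unconstrained max over each pair $(\b_i,\x_i')$ separates into scalar problems whose optimizers produce the clipped terms $\t_i=\max\{\h_i^2/(4\gamma_{uric})-\nu_{uric},0\}$. Substituting into the definition of $I_{uric}$, the key step, and the main obstacle, is to exchange the inner $\min_{\gamma_{uric},\nu_{uric}\geq 0}$ with the outer $\log E$. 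I would justify this by the saddle-point concentration mechanism of \cite{SPH}: the $\t_i$ are i.i.d.\ and, for fixed $(\gamma_{uric}^{(s)},\nu_{uric}^{(s)})$, sufficiently regular functions of the Gaussians $\h_i$ that $\frac{1}{n}\sum_i\t_i$ concentrates exponentially, so the min may be pulled outside at negligible cost in the $n\to\infty$ limit (and, as the authors note, replacing $\doteq$ by $\leq$ throughout suffices for an upper bound). After rescaling $\gamma_{uric}=\gamma_{uric}^{(s)}\sqrt{n}$ and $\nu_{uric}^{(s)}=\nu_{uric}\sqrt{n}$, independence of the $\h_i$ turns $\log Ee^{c_3^{(s)}\sum_i \t_i^{(s)}}$ into $n\log I^{(uric)}$ with $I^{(uric)}=Ee^{c_3^{(s)}\t_1^{(s)}}$, giving the $n$-free expression \eqref{eq:Iuricthmuric}.

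To evaluate $I^{(uric)}$ I would split the standard normal integral at $|\h_1|=2\sqrt{\gamma_{uric}^{(s)}\nu_{uric}^{(s)}}$, use \eqref{eq:deftis1uriclift}: on the inner piece $\t_1^{(s)}=0$ and the contribution is $1-\mathrm{erfc}(\sqrt{2\nu_{uric}^{(s)}\gamma_{uric}^{(s)}})$, while on the outer piece completing the square in the Gaussian density shows that convergence requires $c_3^{(s)}/(4\gamma_{uric}^{(s)})<1/2$ and yields the complementary-error contribution $C_{uric}\,\mathrm{erfc}(a_{uric}/\sqrt{2})$, which together give \eqref{eq:defbigIuric1}. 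Combining the expressions for $I_{uric}$ and $I_{sph}$ in \eqref{eq:chpos9uriclift} and optimizing over $c_3^{(s)}\geq 0$ yields \eqref{eq:ubthmuriclift}. Finally, the ``moreover'' part \eqref{eq:uricprobthm} follows because $A\mapsto\max_{\x\in\Sric}\|A\x\|_2$ is $1$-Lipschitz in the Frobenius norm; Gaussian Lipschitz concentration then upgrades the expectation bound into the stated probability statement, exactly as in \cite{StojnicCSetam09,StojnicHopBnds10}.
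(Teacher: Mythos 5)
Your proposal is correct and follows essentially the same route the paper itself takes: apply the exponential Gordon--Slepian lemma, decompose the bound into $I_{uric}+I_{sph}$ after scaling $c_3=c_3^{(s)}\sqrt{n}$, compute $I_{uric}$ via Lagrangian duality of the cardinality and unit-norm constraints with the saddle-point exchange of $\min$ and $\log E$, evaluate the scalar Gaussian integral $I^{(uric)}$ by splitting at the threshold, and finish with Gaussian Lipschitz concentration for the probability statement. The only additions you make (explicitly factoring out $Ee^{c_3 g}$ and noting the Jensen step, and the observation that $Ee^{c_3\min(\cdot)}\leq\min Ee^{c_3(\cdot)}$ suffices in place of $\doteq$) are details the paper compresses into references to \cite{StojnicMoreSophHopBnds10} and \cite{SPH}, and they are correct.
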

\begin{proof}
The first part follows from the above discussion. The moreover part follows from considerations presented in \cite{StojnicCSetam09,StojnicHopBnds10,StojnicMoreSophHopBnds10}.
\end{proof}

We will below present the results one can get using the above theorem. However, before proceeding with the discussion of the results one can obtain through  Theorem \ref{thm:thmuriclift}, we also mention that the results presented in the previous section (essentially in Lemma \ref{lemma:uriclemma}) can in fact be deduced from the above theorem. Namely, in the limit $c_3^{(s)}\rightarrow 0$, one from (\ref{eq:chpos8uriclift}) has that $E\max_{\x\in\Sric}\h^T\x+\sqrt{\alpha n}$ can be used as an upper bound on $E\xi_{uric}$. This is of course exactly the same expression that was considered in the previous section. For the completeness we present the following corollary where we actually derive the results from the previous section as a special case of those given in the above theorem (of course, the special case actually assumes  $c_3^{(s)}\rightarrow 0$).

\begin{corollary}($E\xi_{uric}$ - upper bound)
Assume the setup of Theorem \ref{thm:thmuriclift}. Let $c_3^{(s)}\rightarrow 0$. Then
\begin{equation}
\widehat{\gamma_{sph}^{(s)}}\rightarrow \frac{\sqrt{\alpha}}{2},\label{eq:gamasphcoruric}
\end{equation}
and
\begin{equation}
I_{sph}(c_3^{(s)},\alpha)\rightarrow
\sqrt{\alpha}.\label{eq:Isphcoruric}
\end{equation}
Further, $p_{uric}\rightarrow 0$ and set $\nu^2=4\nu_{uric}^{(s)}\gamma_{uric}^{(s)}$
\begin{eqnarray}
C_{uric} & \rightarrow & 1-c_3^{(s)}\nu_{uric}^{(s)}+\frac{c_3^{(s)}}{4\gamma_{uric}^{(s)}}=1+c_3^{(s)}\frac{1-\nu^2}{4\gamma_{uric}^{(s)}}\nonumber \\
I^{(uric)}
& \rightarrow &   C_{uric}\mbox{erfc}(\sqrt{2\nu_{uric}^{(s)}\gamma_{uric}^{(s)}}\sqrt{1-2p_{uric}})+(1-\mbox{erfc}(\sqrt{2\nu_{uric}^{(s)}\gamma_{uric}^{(s)}}))\nonumber \\
& \rightarrow & 1+c_3^{(s)}\frac{1-\nu^2}{4\gamma_{uric}^{(s)}}\mbox{erfc}(\sqrt{\nu^2/2}\sqrt{1-2p_{uric}})
+\mbox{erfc}(\sqrt{\nu^2/2}\sqrt{1-2p_{uric}})
-\mbox{erfc}(\sqrt{\nu^2/2})\nonumber \\
& \rightarrow &
1+c_3^{(s)}\frac{1-\nu^2}{4\gamma_{uric}^{(s)}}\mbox{erfc}(\nu/\sqrt{2})
+\frac{2}{\sqrt{\pi}}\frac{\nu}{\sqrt{2}}e^{-\frac{\nu^2}{2}}\frac{c_3^{(s)}}{4\gamma_{uric}^{(s)}}.\nonumber \\\label{eq:defI1I2uriccoruric}
\end{eqnarray}
Moreover, let
\begin{eqnarray}
\hspace{-.4in}I_{uric}(c_3^{(s)},\beta) & \rightarrow & \min_{\gamma_{uric}^{(s)},\nu\geq 0}
\left (\frac{\nu^2\beta}{4\gamma_{uric}^{(s)}}+\gamma_{sec}^{(s)}+\frac{(1-\nu^2)\mbox{erfc}(\nu/\sqrt{2})+
\frac{2}{\sqrt{\pi}}\frac{\nu}{\sqrt{2}}e^{-\frac{\nu^2}{2}}}{4\gamma_{sec}^{(s)}}\right )\nonumber \\
& = & \min_{\nu\geq 0}
\sqrt{\left (\beta\nu^2+
\mbox{erfc}(\frac{\nu}{\sqrt{2}})(1-\nu^2)+\frac{2\nu e^{-\frac{\nu^2}{2}}}{\sqrt{2\pi}})\right )}.\label{eq:Iuriccoruric}
\end{eqnarray}
Choosing $\nu=\sqrt{2}\mbox{erfinv}(1-\beta)$
one then has
\begin{equation}
\lim_{n\rightarrow \infty}\frac{E\xi_{uric}}{\sqrt{m}}=\frac{E(\max_{\x\in\Sric} \|A\x\|_2)}{\sqrt{m}} \leq 1+\frac{1}{\sqrt{\alpha}}\sqrt{\beta+\frac{2\mbox{erfinv}(1-\beta)}{\sqrt{\pi}e^{(\mbox{erfinv}(1-\beta))^2}}}.\label{eq:uricexpcor}
\end{equation}
\label{cor:coruric}
\end{corollary}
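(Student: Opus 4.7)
The plan is to establish the corollary by carefully taking the $c_3^{(s)}\to 0$ limit in each ingredient of Theorem \ref{thm:thmuriclift} and then performing two scalar minimizations in closed form. The corollary is essentially a ``sanity check'': the weaker bound of Lemma \ref{lemma:uriclemma} should emerge from the general lift by choosing the lifting parameter $c_3^{(s)}$ to be zero, and the whole job is to make that passage to the limit precise.

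First I would handle the sphere term. Plugging $c_3^{(s)}=0$ into (\ref{eq:gamasphthmuriclift}) gives $\widehat{\gamma_{sph}^{(s)}}\to \sqrt{\alpha}/2$, which is the first displayed claim (\ref{eq:gamasphcoruric}). For $I_{sph}$ I would expand the logarithm in (\ref{eq:Isphthmuriclift}) as $\log(1-x)=-x+o(x)$ with $x=c_3^{(s)}/(2\widehat{\gamma_{sph}^{(s)}})$; the factor $c_3^{(s)}$ cancels and the second term tends to $\alpha/(4\widehat{\gamma_{sph}^{(s)}})\to\sqrt{\alpha}/2$. Adding the two pieces yields $I_{sph}(c_3^{(s)},\alpha)\to \sqrt{\alpha}$, which is (\ref{eq:Isphcoruric}), and the leading $-c_3^{(s)}/2$ in (\ref{eq:ubthmuriclift}) vanishes.

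Next I would expand $I^{(uric)}$ from (\ref{eq:defbigIuric1}) to first order in $c_3^{(s)}$. From (\ref{defhelpbigIuric}) one has $p_{uric}=c_3^{(s)}/(4\gamma_{uric}^{(s)})\to 0$ and $r_{uric}=-c_3^{(s)}\nu_{uric}^{(s)}\to 0$, so $C_{uric}=e^{r_{uric}}/\sqrt{1-2p_{uric}}= 1+c_3^{(s)}(1-\nu^2)/(4\gamma_{uric}^{(s)})+o(c_3^{(s)})$ after substituting $\nu^2=4\nu_{uric}^{(s)}\gamma_{uric}^{(s)}$; similarly $a_{uric}=\nu\sqrt{1-2p_{uric}}=\nu-\nu p_{uric}+o(c_3^{(s)})$, so $\mbox{erfc}(a_{uric}/\sqrt{2})=\mbox{erfc}(\nu/\sqrt{2})+(2/\sqrt{\pi})e^{-\nu^2/2}(\nu p_{uric}/\sqrt{2})+o(c_3^{(s)})$ by the derivative of $\mbox{erfc}$. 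Substituting into (\ref{eq:defbigIuric1}) the leading $\mbox{erfc}$ terms cancel against $-\mbox{erfc}(\sqrt{2\nu_{uric}^{(s)}\gamma_{uric}^{(s)}})=-\mbox{erfc}(\nu/\sqrt{2})$ and I obtain the expression for $I^{(uric)}$ displayed in (\ref{eq:defI1I2uriccoruric}). Then $\log(I^{(uric)})/c_3^{(s)}\to (I^{(uric)}-1)/c_3^{(s)}$ produces the first line of (\ref{eq:Iuriccoruric}) after replacing $\nu_{uric}^{(s)}\beta$ by $\nu^2\beta/(4\gamma_{uric}^{(s)})$.

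Finally I would do the two minimizations. Every term inside the minimum has the form $A/(4\gamma_{uric}^{(s)})+\gamma_{uric}^{(s)}$ with $A=\nu^2\beta+(1-\nu^2)\mbox{erfc}(\nu/\sqrt{2})+(2\nu/\sqrt{2\pi})e^{-\nu^2/2}$, so minimizing over $\gamma_{uric}^{(s)}$ by AM-GM gives exactly $\sqrt{A}$, which is the second line of (\ref{eq:Iuriccoruric}). For the minimization over $\nu$ I would differentiate $A$: the terms proportional to $e^{-\nu^2/2}$ that come from differentiating $(1-\nu^2)\mbox{erfc}(\nu/\sqrt{2})$ and $(2\nu/\sqrt{2\pi})e^{-\nu^2/2}$ collapse because $\sqrt{2}/\sqrt{\pi}=2/\sqrt{2\pi}$; I would highlight this small algebraic cancellation as the single delicate step. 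What is left is $2\nu(\beta-\mbox{erfc}(\nu/\sqrt{2}))=0$, giving the stationary point $\nu=\sqrt{2}\,\mbox{erfinv}(1-\beta)$ named in the corollary. Plugging back, $\nu^2\beta+(1-\nu^2)\mbox{erfc}(\nu/\sqrt{2})=\beta$ by construction, and the remaining Gaussian term becomes $(2/\sqrt{\pi})\mbox{erfinv}(1-\beta)e^{-(\mbox{erfinv}(1-\beta))^2}$. Combining this with $I_{sph}\to\sqrt{\alpha}$ and the vanishing $-c_3^{(s)}/2$ term, dividing by $\sqrt{\alpha}$ as in (\ref{eq:ubthmuriclift}), reproduces (\ref{eq:uricexpcor}). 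The main obstacle is purely computational: carrying the Taylor expansion of $I^{(uric)}$ to first order without dropping terms, and spotting the $\sqrt{2}/\sqrt{\pi}=2/\sqrt{2\pi}$ cancellation that makes the minimizer $\nu$ solvable in closed form.
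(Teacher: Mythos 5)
Your proof is correct and follows the same route the paper takes for its primary argument: carrying the $c_3^{(s)}\rightarrow 0$ limit through each ingredient of Theorem \ref{thm:thmuriclift}, applying AM--GM over $\gamma_{uric}^{(s)}$, and then selecting $\nu$. The one place you go beyond the paper is that you actually verify optimality of $\nu=\sqrt{2}\,\mbox{erfinv}(1-\beta)$ by differentiating and exhibiting the $\sqrt{2}/\sqrt{\pi}=2/\sqrt{2\pi}$ cancellation, whereas the paper states this choice is optimal but explicitly skips showing it; the paper also offers a second, independent route (directly evaluating $E_{\nu\le|\h_i|}|\h_i|^2$ as in Lemma \ref{lemma:uriclemma}) which you do not use but which your derivation makes superfluous.
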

\begin{proof}
Theorem \ref{thm:thmuriclift} holds for any $c_3^{(s)}\geq 0$. The above corollary instead of looking for the best possible $c_3^{(s)}$ in Theorem \ref{thm:thmuriclift} assumes a simple $c_3^{(s)}\rightarrow 0$ scenario. The proof of the fact that in such a scenario the upper bounds formulation given in Theorem \ref{thm:thmuriclift} indeed boils down to what is stated in Lemma \ref{lemma:uriclemma} is essentially contained in the steps mentioned above. The choice for $\nu$ is actually optimal (however, we skip showing that).

Alternatively, as mentioned above, one can look at $\frac{E\max_{\x\in\Sric}\h^T\x}{\sqrt{m}}+1$ and following the methodology presented in (\ref{eq:uriceq01}) (and originally in \cite{StojnicCSetam09}) obtain for a scalar $\nu=\sqrt{2}\mbox{erfinv}(1-\beta)$
\begin{equation}
\frac{E\max_{\x\in\Sric}\h^T\x}{\sqrt{m}} +1 \leq \frac{1}{\sqrt{\alpha}}\sqrt{E_{\nu\leq |\h_i|}|\h_i|^2} +1.\label{eq:coruric0}
\end{equation}
Solving the integral (and using all the concentrating machinery of \cite{StojnicCSetam09}) one can write
\begin{equation}
\frac{E\max_{\x\in\Sric}\h^T\x}{\sqrt{m}}+1  \doteq  \frac{1}{\sqrt{\alpha}}\sqrt{\left (\int_{\nu\leq |\h_i|}|\h_i|^2\frac{e^{-\frac{\h_i^2}{2}}d\h_i}{\sqrt{2\pi}}\right )}+1= \frac{1}{\sqrt{\alpha}} \sqrt{\left (\mbox{erfc}(\frac{\nu}{\sqrt{2}})+\frac{2\nu e^{-\frac{\nu^2}{2}}}{\sqrt{2\pi}}\right )}+1.\label{eq:coruric1}
\end{equation}
Connecting beginning and end in (\ref{eq:coruric1}) then leads to the condition given in the above corollary.
\end{proof}


\subsection{Numerical results -- lowered upper bound on $\xi_{uric}$}
\label{sec:xiuprobnumlift}

In this subsection we present a small collection of numerical results one can obtain based on Theorem \ref{thm:thmuriclift}. In Tables \ref{tab:uriclifttab1det} and \ref{tab:uriclifttab2det} we show the upper bounds on $\lim_{n\rightarrow\infty}\frac{E\xi_{uric}}{\sqrt{m}}$ one can obtain based on Theorem \ref{thm:thmuriclift}. We refer to those bounds as $\xi_{uric}^{(u,low)}$. Also, to get a feeling how the results of Theorem \ref{thm:thmuriclift} fare when compared to the ones presented in the previous section we in Tables \ref{tab:uriclifttab1} and \ref{tab:uriclifttab2} also present the results we obtained in Subsection \ref{sec:xiuprobnum} (which are of course based on Lemma \ref{lemma:uriclemma}  and Corollary \ref{cor:coruric}). For completeness, we in Tables \ref{tab:uriclifttab1} and \ref{tab:uriclifttab2} also recall on the results from \cite{BT10}.

\begin{table}
\caption{Lowered upper bounds on $\lim_{n\rightarrow\infty}\frac{E\xi_{uric}}{\sqrt{m}}$ -- low $\beta/\alpha\leq 0.5$ regime; optimized parameters}\vspace{.1in}
\hspace{-0in}\centering
\begin{tabular}{||l|c|c|c|c|c||}\hline\hline
 \hspace{.5in}$\alpha$                                                        & $0.1$ & $0.3$ & $0.5$ & $0.7$ & $0.9$  \\ \hline\hline
 $\beta/\alpha=0.1$; $c_{3}^{(s)}$                                            & $0.2577$ & $0.3596$ & $0.4033$ & $0.4247$ & $0.4338$ \\ \hline
 $\beta/\alpha=0.1$; $\nu_{uric}^{(s)}$                                       & $11.375$ & $5.4640$ & $3.7775$ & $2.9153$ & $2.3745$ \\ \hline
 $\beta/\alpha=0.1$; $\gamma_{uric}^{(s)}$                                    & $0.1866$ & $0.2837$ & $0.3388$ & $0.3773$ & $0.4063$ \\ \hline
 $\beta/\alpha=0.1$; $\xi_{uric}^{(u,low)}$                                   & $1.8525$ & $1.7602$ & $1.7129$ & $1.6798$ & $1.6538$ \\ \hline\hline
 $\beta/\alpha=0.3$; $c_{3}^{(s)}$                                            & $0.2893$ & $0.3448$ & $0.3336$ & $0.3005$ & $0.2584$ \\ \hline
 $\beta/\alpha=0.3$; $\nu_{uric}^{(s)}$                                       & $5.4820$ & $2.3578$ & $1.4759$ & $1.0278$ & $0.7494$ \\ \hline
 $\beta/\alpha=0.3$; $\gamma_{uric}^{(s)}$                                    & $0.2675$ & $0.3854$ & $0.4409$ & $0.4721$ & $0.4900$ \\ \hline
 $\beta/\alpha=0.3$; $\xi_{uric}^{(u,low)}$                                   & $2.3338$ & $2.1409$ & $2.0386$ & $1.9650$ & $1.9061$ \\ \hline\hline
 $\beta/\alpha=0.5$; $c_{3}^{(s)}$                                            & $0.2833$ & $0.2914$ & $0.2386$ & $0.1748$ & $0.1157$ \\ \hline
 $\beta/\alpha=0.5$; $\nu_{uric}^{(s)}$                                       & $3.7653$ & $1.4663$ & $0.8237$ & $0.5036$ & $0.3121$ \\ \hline
 $\beta/\alpha=0.5$; $\gamma_{uric}^{(s)}$                                    & $0.3117$ & $0.4313$ & $0.4771$ & $0.4961$ & $0.5026$ \\ \hline
 $\beta/\alpha=0.5$; $\xi_{uric}^{(u,low)}$                                   & $2.6190$ & $2.3437$ & $2.1948$ & $2.0868$ & $2.0005$ \\ \hline\hline
\end{tabular}
\label{tab:uriclifttab1det}
\end{table}

\begin{table}
\caption{Lowered upper bounds on $\lim_{n\rightarrow\infty}\frac{E\xi_{uric}}{\sqrt{m}}$ -- low $\beta/\alpha> 0.5$ regime; optimized parameters}\vspace{.1in}
\hspace{-0in}\centering
\begin{tabular}{||l|c|c|c|c|c||}\hline\hline
 \hspace{.5in}$\alpha$                                                        & $0.1$ & $0.3$ & $0.5$ & $0.7$ & $0.9$  \\ \hline\hline
 $\beta/\alpha=0.7$; $c_{3}^{(s)}$                                            & $0.2694$ & $0.2379$ & $0.1589$ & $0.0869$ & $0.0365$ \\ \hline
 $\beta/\alpha=0.7$; $\nu_{uric}^{(s)}$                                       & $2.8847$ & $1.0152$ & $0.5014$ & $0.2557$ & $0.1195$ \\ \hline
 $\beta/\alpha=0.7$; $\gamma_{uric}^{(s)}$                                    & $0.3425$ & $0.4577$ & $0.4923$ & $0.5015$ & $0.5020$ \\ \hline
 $\beta/\alpha=0.7$; $\xi_{uric}^{(u,low)}$                                   & $2.8268$ & $2.4774$ & $2.2863$ & $2.1481$ & $2.0392$ \\ \hline\hline
 $\beta/\alpha=0.9$; $c_{3}^{(s)}$                                            & $0.2535$ & $0.1898$ & $0.0982$ & $0.0341$ & $0.0051$ \\ \hline
 $\beta/\alpha=0.9$; $\nu_{uric}^{(s)}$                                       & $2.3337$ & $0.7375$ & $0.3103$ & $0.1193$ & $0.0290$ \\ \hline
 $\beta/\alpha=0.9$; $\gamma_{uric}^{(s)}$                                    & $0.3659$ & $0.4740$ & $0.4984$ & $0.5014$ & $0.5004$ \\ \hline
 $\beta/\alpha=0.9$; $\xi_{uric}^{(u,low)}$                                   & $2.9907$ & $2.5723$ & $2.3426$ & $2.1784$ & $2.0522$  \\ \hline\hline
\end{tabular}
\label{tab:uriclifttab2det}
\end{table}

As can be seen from the tables, while conceptually substantial, in practice the improvement lowered bounds from Theorem \ref{thm:thmuriclift} provide may not always be significant. That can be because the methods are not powerful enough to make a bigger improvement or simply because a big improvement may not be possible (in other words the results obtained in Lemma \ref{lemma:uriclemma} may very well already be fairly close to the optimal ones). As for the limits of the developed methods, we do want to emphasize that we did solve the numerical optimizations that appear in Theorem \ref{thm:thmuriclift} only on a local optimum level and obviously only with a finite precision. We do not know if a substantial change would occur in the presented results had we solved it on a global optimum level (we recall that finding local optima is of course certainly enough to establish valid upper bounds; moreover in Tables \ref{tab:uriclifttab1det} and \ref{tab:uriclifttab2det} we provide a detailed values for optimizing parameters that we chose). As for how far away from the true $E\xi_{uric}$ are the results presented in the tables, we actually believe that they are in fact very close to the optimal ones.

\begin{table}
\caption{Lowered upper bounds on $\lim_{n\rightarrow\infty}\frac{E\xi_{uric}}{\sqrt{m}}$ -- low $\beta/\alpha\leq 0.5$ regime}\vspace{.1in}
\hspace{-0in}\centering
\begin{tabular}{||l|c|c|c|c|c||}\hline\hline
 \hspace{1in}$\alpha$                                                         & $0.1$ & $0.3$ & $0.5$ & $0.7$ & $0.9$  \\ \hline\hline
 $\beta/\alpha=0.1$; $\xi_{uric}^{BT}$                                        & $1.9786$ & $1.8970$ & $1.8562$ & $1.8280$ & $1.8062$ \\ \hline
 $\beta/\alpha=0.1$; $\xi_{uric}^{(u)}$ ($c_3^{(s)}\rightarrow 0$)            & $1.9192$ & $1.8049$ & $1.7471$ & $1.7071$ & $1.6761$ \\ \hline
 $\beta/\alpha=0.1$; $\xi_{uric}^{(u,low)}$ (optimized $c_3^{(s)}$)           & $1.8525$ & $1.7602$ & $1.7129$ & $1.6798$ & $1.6538$ \\ \hline\hline
 $\beta/\alpha=0.3$; $\xi_{uric}^{BT}$                                        & $2.5822$ & $2.4067$ & $2.3142$ & $2.2471$ & $2.1925$ \\ \hline
 $\beta/\alpha=0.3$; $\xi_{uric}^{(u)}$ ($c_3^{(s)}\rightarrow 0$)            & $2.3941$ & $2.1710$ & $2.0560$ & $1.9753$ & $1.9123$ \\ \hline
 $\beta/\alpha=0.3$; $\xi_{uric}^{(u,low)}$ (optimized $c_3^{(s)}$)           & $2.3338$ & $2.1409$ & $2.0386$ & $1.9650$ & $1.9061$ \\ \hline\hline
 $\beta/\alpha=0.5$; $\xi_{uric}^{BT}$                                        & $2.9622$ & $2.7036$ & $2.5591$ & $2.4479$ & $2.3508$ \\ \hline
 $\beta/\alpha=0.5$; $\xi_{uric}^{(u)}$ ($c_3^{(s)}\rightarrow 0$)            & $2.6706$ & $2.3633$ & $2.2030$ & $2.0901$ & $2.0017$ \\ \hline
 $\beta/\alpha=0.5$; $\xi_{uric}^{(u,low)}$ (optimized $c_3^{(s)}$)           & $2.6190$ & $2.3437$ & $2.1948$ & $2.0868$ & $2.0005$ \\ \hline\hline
\end{tabular}
\label{tab:uriclifttab1}
\end{table}

\begin{table}
\caption{Lowered upper bounds on $\lim_{n\rightarrow\infty}\frac{E\xi_{uric}}{\sqrt{m}}$ -- high $\beta/\alpha> 0.5$ regime}\vspace{.1in}
\hspace{-0in}\centering
\begin{tabular}{||l|c|c|c|c|c||}\hline\hline
 \hspace{1in}$\alpha$                                                         & $0.1$ & $0.3$ & $0.5$ & $0.7$ & $0.9$  \\ \hline\hline
 $\beta/\alpha=0.7$; $\xi_{uric}^{BT}$                                        & $3.2505$ & $2.9094$ & $2.7053$ & $2.5337$ & $2.3769$ \\ \hline
 $\beta/\alpha=0.7$; $\xi_{uric}^{(u)}$  ($c_3^{(s)}\rightarrow 0$)           & $2.8709$ & $2.4898$ & $2.2898$ & $2.1489$ & $2.0394$ \\ \hline
 $\beta/\alpha=0.7$; $\xi_{uric}^{(u,low)}$  (optimized $c_3^{(s)}$)          & $2.8268$ & $2.4774$ & $2.2863$ & $2.1481$ & $2.0392$ \\ \hline\hline
 $\beta/\alpha=0.9$; $\xi_{uric}^{BT}$                                        & $3.4849$ & $3.0577$ & $2.7779$ & $2.5385$ & $2.3769$ \\ \hline
 $\beta/\alpha=0.9$; $\xi_{uric}^{(u)}$  ($c_3^{(s)}\rightarrow 0$)           & $3.0283$ & $2.5801$ & $2.3440$ & $2.1785$ & $2.0522$ \\ \hline
 $\beta/\alpha=0.9$; $\xi_{uric}^{(u,low)}$  (optimized $c_3^{(s)}$)          & $2.9907$ & $2.5723$ & $2.3426$ & $2.1784$ & $2.0522$ \\ \hline\hline
\end{tabular}
\label{tab:uriclifttab2}
\end{table}

\section{Bounding $\xi_{lric}$}
\label{sec:xil}

In this section we look at $\xi_{lric}$ and design a mechanism that can be used to lower-bound it. The mechanism will be an appropriate adaption of the mechanism presented in Section \ref{sec:xiu} (clearly, as such it will be to an extent related to the mechanism we presented in \cite{StojnicCSetam09} and used for the analysis of (\ref{eq:l1})'s ability to recover $\tilde{\x}$). As earlier, we will again assume a substantial level of familiarity with many of the well-known results that relate to the performance characterization of (\ref{eq:l1}). We start by recalling on the definition of set $\Sric$ from (\ref{eq:defSric})
\begin{equation}
\Sric=\{\x\in S^{n-1}| \quad \|\x\|_{\ell_0}=k\},\label{eq:defSricl}
\end{equation}
where $S^{n-1}$ is the unit sphere in $R^n$. Then one can transform the second part of (\ref{eq:ulric}) in the following way
\begin{equation}
\xi_{lric}=\min_{\x\in\Sric}\|A\x\|_2.\label{eq:negham1l}
\end{equation}
As mentioned in Section \ref{sec:xiu}, a set of problems very similar to (\ref{eq:negham1l}) was considered in \cite{StojnicCSetam09,StojnicHopBnds10}. We will here utilize mechanisms similar to some of those from \cite{StojnicCSetam09,StojnicHopBnds10} and will attempt to establish a set of lower bounds on $\xi_{lric}$. However, as was the case in Section \ref{sec:xiu}, one should note that the structure of set $\Sric$ is somewhat different than the structure of sets considered in \cite{StojnicCSetam09,StojnicHopBnds10} and again a careful approach will be needed to readapt the mechanisms from \cite{StojnicCSetam09,StojnicHopBnds10} to the problem we consider here. Also, as earlier, since the mechanisms of \cite{StojnicCSetam09,StojnicHopBnds10} were powerful enough to establish the concentration of quantities similar to $\xi_{lric}$ we will mostly focus only on $E\xi_{lric}$. Below we present a way to create a lower-bound on the optimal value of $E\xi_{lric}$.

\subsection{Probabilistic approach to upper bounding $\xi_{lric}$}
\label{sec:xilprob}

In this section we look at $E\xi_{lric}$ and design its a lower-bound. To do so we rely on the following lemma (which is a modified version of a similar lemma from \cite{StojnicHopBnds10} and, as mentioned in \cite{StojnicHopBnds10}, a direct application of Theorem $2$ from \cite{StojnicHopBnds10} proven in \cite{Gordon85}):
\begin{lemma}
Let $A$ be an $m\times n$ matrix with i.i.d. standard normal components. Let $\g$ and $\h$ be $n\times 1$ and $m\times 1$ vectors, respectively, with i.i.d. standard normal components. Also, let $g$ be a standard normal random variable. Then
\begin{equation}
E(\min_{\x\in\Sric}\max_{\|\y\|_2=1}(\y^T A\x +\|\x\|_2 g))\geq E(\min_{\x\in\Sric}\max_{\|\y\|_2=1}(\|\x\|_2\g^T\y+\h^T\x)).\label{eq:posexplemmal}
\end{equation}\label{lemma:posexplemmal}
\end{lemma}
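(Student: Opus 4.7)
The plan is to derive this lemma as a direct application of Gordon's Gaussian min-max comparison theorem (Theorem $2$ of \cite{StojnicHopBnds10}, originally \cite{Gordon85}), exactly as the author indicates. That theorem says: if $\{X_{\x,\y}\}$ and $\{Y_{\x,\y}\}$ are two centered Gaussian processes indexed by $(\x,\y)\in\Sric\times S^{m-1}$ such that
\begin{itemize}
\item $E X_{\x,\y}^2=E Y_{\x,\y}^2$ for every $(\x,\y)$,
\item $E X_{\x,\y}X_{\x',\y'}\le E Y_{\x,\y}Y_{\x',\y'}$ whenever $\x\ne\x'$,
\item $E X_{\x,\y}X_{\x,\y'}= E Y_{\x,\y}Y_{\x,\y'}$ whenever the first coordinates agree,
\end{itemize}
then $E\min_{\x}\max_{\y}X_{\x,\y}\ge E\min_{\x}\max_{\y}Y_{\x,\y}$.

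The natural identification is $X_{\x,\y}=\y^T A\x+\|\x\|_2 g$ and $Y_{\x,\y}=\|\x\|_2\g^T\y+\h^T\x$, with $A,\g,\h,g$ mutually independent Gaussian objects as in the hypothesis. I would then just compute the three covariance quantities, exploiting the fact that every $\x\in\Sric$ has $\|\x\|_2=1$ and every $\y$ in the outer set has $\|\y\|_2=1$. Concretely:
\begin{equation}
E X_{\x,\y}^2=\|\x\|_2^2\|\y\|_2^2+\|\x\|_2^2=2,\qquad E Y_{\x,\y}^2=\|\x\|_2^2\|\y\|_2^2+\|\x\|_2^2=2,
\end{equation}
so condition (i) holds. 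For (iii), with fixed $\x$ and two $\y,\y'$ on the sphere,
\begin{equation}
E X_{\x,\y}X_{\x,\y'}=\y^T\y'\|\x\|_2^2+\|\x\|_2^2=\y^T\y'+1=E Y_{\x,\y}Y_{\x,\y'}.
\end{equation}
For (ii), with $\x\ne\x'$ (both unit) and arbitrary $\y,\y'$ (unit),
\begin{equation}
E Y_{\x,\y}Y_{\x',\y'}-E X_{\x,\y}X_{\x',\y'}=(\y^T\y'+\x^T\x')-(\y^T\y'\x^T\x'+1)=(1-\x^T\x')(1-\y^T\y')\ge 0,
\end{equation}
since both factors lie in $[0,2]$ for unit vectors. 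This is exactly the required inequality, so Gordon's theorem applies and gives (\ref{eq:posexplemmal}).

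The only thing that is genuinely new relative to the lemma of \cite{StojnicHopBnds10} invoked here is that the first index set is $\Sric$ rather than a general subset of the sphere. But the covariance computations above use only that $\Sric\subset S^{n-1}$, and the combinatorial $\ell_0$ constraint defining $\Sric$ never enters the verification. This is precisely why the author remarks that ``the only difference is the structure of $\Sric$ which changes nothing in the proof.'' Consequently there is no real obstacle: the proof is a mechanical checking of Gordon's covariance hypotheses, and the minimax (as opposed to max-max) form of the conclusion is exactly what Theorem $2$ of \cite{StojnicHopBnds10} delivers, in contrast to the weaker Slepian/Gordon-type comparison (Theorem $4$) used in Lemma \ref{lemma:posexplemma}.
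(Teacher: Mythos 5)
Your approach—identifying the two processes, invoking Gordon's Gaussian min-max comparison theorem, and verifying the three covariance conditions—is exactly the right one and matches what the paper does in spirit (the paper simply cites Theorem~$2$ of \cite{StojnicHopBnds10} and asserts that the structure of $\Sric$ is immaterial; you actually carry out the covariance check, which is more informative). The only substantive point is that you have made \emph{two} compensating sign errors, which makes the verification look correct even though both intermediate steps are reversed.

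First, the direction of Gordon's hypothesis for distinct first indices is flipped. The correct form is: if $E X_{\x,\y}^2=E Y_{\x,\y}^2$, $E X_{\x,\y}X_{\x,\y'}\le E Y_{\x,\y}Y_{\x,\y'}$ for the same $\x$, and $E X_{\x,\y}X_{\x',\y'}\ge E Y_{\x,\y}Y_{\x',\y'}$ whenever $\x\ne\x'$, then $E\min_{\x}\max_{\y}X_{\x,\y}\ge E\min_{\x}\max_{\y}Y_{\x,\y}$. You wrote $\le$ where $\ge$ is needed. Second, your algebraic simplification of the covariance gap has the sign reversed: with $a=\x^T\x'$ and $b=\y^T\y'$, one has
\begin{equation}
E X_{\x,\y}X_{\x',\y'}-E Y_{\x,\y}Y_{\x',\y'}=(ab+1)-(a+b)=(1-a)(1-b)\ge 0,
\end{equation}
so it is $E X - E Y$, not $E Y - E X$, that equals $(1-\x^T\x')(1-\y^T\y')$. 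Once both signs are corrected, the verification goes through exactly as you intended: the same-$\x$ covariances match with equality, the different-$\x$ covariances of the $A$-process dominate those of the decoupled process, and Gordon's theorem delivers the claimed inequality. Your concluding observation that the combinatorial structure of $\Sric$ plays no role beyond $\Sric\subset S^{n-1}$ is correct and is precisely the point the paper makes.
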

\begin{proof}
As mentioned above, the proof is a standard/direct application of Theorem $2$ from \cite{StojnicHopBnds10} proven in \cite{Gordon85}). We skip the details and mention that, as in Lemma \ref{lemma:uriclemma}, the only difference between the proof one needs here and the corresponding one given in \cite{StojnicHopBnds10} is the structure of set $\Sric$. However, such a difference changes nothing in the remainder of the proof.
\end{proof}

Using results of Lemma \ref{lemma:posexplemma} we then have
\begin{multline}
E(\min_{\x\in\Sric} \|A\x\|_2) =E(\min_{\x\Sric}\max_{\|\y\|_2=1}(\y^T A\x +\|\x\|_2 g))\\\geq E(\min_{\x\in\Sric}\max_{\|\y\|_2=1}(\|\x\|_2\g^T\y+\h^T\x))=E\|\x\|_2\|\g\|_2+E\min_{\x\in\Sric}\h^T\x\geq \sqrt{m}-\frac{1}{4\sqrt{m}}+E\min_{\x\in\Sric}\h^T\x.\label{eq:poshopaftlemma2l}
\end{multline}
Let $\bar{\h}$ be the vector of magnitudes of $\h$ sorted in nondecreasing order (of course, ties are broken arbitrarily). Then from (\ref{eq:poshopaftlemma2}) we have
\begin{equation}
E(\min_{\x\in\Sric} \|A\x\|_2)\geq \sqrt{m}-\frac{1}{4\sqrt{m}}-E\sqrt{\sum_{i=n-k+1}^{n}\bar{\h}_i}\geq \sqrt{m}-\frac{1}{4\sqrt{m}}-\sqrt{E\sum_{i=n-k+1}^{n}\bar{\h}_i}.\label{eq:poshopaftlemma3l}
\end{equation}
Using the results of \cite{StojnicCSetam09} one then has
\begin{equation}
\lim_{n\rightarrow \infty}\frac{E(\min_{\x\in\Sric} \|A\x\|_2)}{\sqrt{m}}\geq 1-\sqrt{\lim_{n\rightarrow \infty}\frac{E\sum_{i=n-k+1}^{n}\bar{\h}_i^2}{\alpha n}}= 1-\frac{1}{\sqrt{\alpha}}\sqrt{\beta+\frac{2\mbox{erfinv}(1-\beta)}{\sqrt{\pi}e^{(\mbox{erfinv}(1-\beta))^2}}}.\label{eq:poshopaftlemma4l}
\end{equation}
Connecting beginning and end of (\ref{eq:poshopaftlemma4l}) we finally have an upper bound on $E\xi_{lric}$ (in a scaled more appropriate form),
\begin{equation}
\lim_{n\rightarrow \infty}\frac{E\xi_{lric}}{\sqrt{m}}=\frac{E(\min_{\x\in\Sric} \|A\x\|_2)}{\sqrt{m}} \geq 1-\frac{1}{\sqrt{\alpha}}\sqrt{\beta+\frac{2\mbox{erfinv}(1-\beta)}{\sqrt{\pi}e^{(\mbox{erfinv}(1-\beta))^2}}}.\label{eq:poshopubexp1l}
\end{equation}

We summarize our results from this subsection in the following lemma.

\begin{lemma}
Let $A$ be an $m\times n$ matrix with i.i.d. standard normal components. Let $n$ be large and let $k=\beta n$, $m=\alpha n$, where $\beta,\alpha>0$ are constants independent of $n$. Let $\xi_{lric}$ be as in (\ref{eq:negham1l}).
\begin{equation}
\lim_{n\rightarrow \infty}\frac{E\xi_{lric}}{\sqrt{m}}=\frac{E(\min_{\x\in\Sric} \|A\x\|_2)}{\sqrt{m}} \geq 1-\frac{1}{\sqrt{\alpha}}\sqrt{\beta+\frac{2\mbox{erfinv}(1-\beta)}{\sqrt{\pi}e^{(\mbox{erfinv}(1-\beta))^2}}}.\label{eq:lricexplemma}
\end{equation}
Moreover, let $\xi_{lric}^{(l)}$ be a quantity such that
\begin{equation}
1-\frac{1}{\sqrt{\alpha}}\sqrt{\beta+\frac{2\mbox{erfinv}(1-\beta)}{\sqrt{\pi}e^{(\mbox{erfinv}(1-\beta))^2}}}>\xi_{lric}^{(l)}.
\label{eq:condlriclemma}
\end{equation}
Then
\begin{eqnarray}
& & \lim_{n\rightarrow\infty}P(\min_{\x\in\Sric}(\|A\x\|_2)\geq \xi_{lric}^{(l)}\sqrt{m})\geq 1\nonumber \\
& \Leftrightarrow & \lim_{n\rightarrow\infty}P(\xi_{lric}\geq \xi_{lric}^{(l)}\sqrt{m})\geq 1 \nonumber \\
& \Leftrightarrow & \lim_{n\rightarrow\infty}P(\xi_{lric}^2\geq (\xi_{lric}^{(l)})^2 m)\geq 1. \label{eq:lricproblemma}
\end{eqnarray}
\label{lemma:lriclemma}
\end{lemma}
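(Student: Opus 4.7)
The proof runs parallel to Lemma \ref{lemma:uriclemma}, but uses the min-max Gordon comparison (Lemma \ref{lemma:posexplemmal}) in place of its max counterpart. The plan is to apply that comparison to the Gaussian process $\y^T A\x+\|\x\|_2 g$ indexed over $(\x,\y)\in\Sric\times S^{m-1}$, evaluate the right-hand side exactly, bound each term asymptotically, and finally upgrade the resulting mean statement to the desired probability statement via Gaussian concentration.

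Concretely, the left-hand side of the comparison equals $E\min_{\x\in\Sric}\|A\x\|_2$ because $\max_{\|\y\|_2=1}\y^T A\x=\|A\x\|_2$ and $Eg=0$. The right-hand side decouples into $E\|\g\|_2+E\min_{\x\in\Sric}\h^T\x$, since every $\x\in\Sric$ is a unit vector so $\|\x\|_2$ pulls out of the inner maximization. For the first term I would use the standard chi-expectation bound $E\|\g\|_2\geq\sqrt{m}-\frac{1}{4\sqrt{m}}=\sqrt{m}(1+o(1))$. For the second term I would note that, because $\Sric$ consists of unit vectors supported on exactly $k$ coordinates, the optimal $\x$ aligns with the signs of the $k$ largest entries of $|\h|$; hence $\min_{\x\in\Sric}\h^T\x=-\sqrt{\sum_{i=n-k+1}^{n}\bar{\h}_i^{\,2}}$, where $\bar{\h}$ is the order statistic of $|\h|$ in nondecreasing order. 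Jensen's inequality then gives $-E\sqrt{\sum_{i=n-k+1}^{n}\bar{\h}_i^{\,2}}\geq-\sqrt{E\sum_{i=n-k+1}^{n}\bar{\h}_i^{\,2}}$, and the asymptotic evaluation of this top-$k$ Gaussian order statistic, already carried out in \cite{StojnicCSetam09} and reused in (\ref{eq:poshopaftlemma4}), yields $\frac{1}{n}E\sum_{i=n-k+1}^{n}\bar{\h}_i^{\,2}\to\beta+\frac{2\,\mbox{erfinv}(1-\beta)}{\sqrt{\pi}\,e^{(\mbox{erfinv}(1-\beta))^{2}}}$. Dividing everything by $\sqrt{m}=\sqrt{\alpha n}$ and passing to the limit produces exactly (\ref{eq:lricexplemma}).

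For the ``moreover'' part I would invoke Gaussian concentration of Lipschitz functions: the map $A\mapsto \min_{\x\in\Sric}\|A\x\|_2$ is $1$-Lipschitz in the Frobenius norm of $A$ because $\Sric\subset S^{n-1}$, so $\xi_{lric}/\sqrt{m}$ concentrates on a window of width $o(1)$ around its mean with Gaussian tails. Combined with the first part, the strict inequality (\ref{eq:condlriclemma}) then forces the three equivalent assertions in (\ref{eq:lricproblemma}) to hold with probability tending to $1$, exactly as in the analogous argument for $\xi_{uric}$ in Lemma \ref{lemma:uriclemma}. I do not expect any substantive obstacle here; the only point that needs real care is the correct directionality of Gordon's inequality when passing from $\max$ to $\min\max$ (which is precisely what distinguishes Lemma \ref{lemma:posexplemmal} from Lemma \ref{lemma:posexplemma}), together with the need to verify that the additive loss $-1/(4\sqrt{m})$ coming from $E\|\g\|_2$ and the Jensen step are both $o(\sqrt{m})$ so that they disappear in the limit.
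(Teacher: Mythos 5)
Your proposal is correct and follows the paper's own route almost line by line: apply the min--max Gordon comparison (Lemma~\ref{lemma:posexplemmal}), decouple the right-hand side into $E\|\g\|_2+E\min_{\x\in\Sric}\h^T\x$, bound $E\|\g\|_2$ by $\sqrt{m}-\tfrac{1}{4\sqrt{m}}$, identify $\min_{\x\in\Sric}\h^T\x$ with $-\sqrt{\sum_{i=n-k+1}^{n}\bar{\h}_i^{\,2}}$, apply Jensen, use the asymptotic top-$k$ order-statistic evaluation from \cite{StojnicCSetam09}, and finish with Lipschitz concentration for the ``moreover'' part. The only difference is cosmetic: you spell out the Lipschitz constant and the chi-norm bound explicitly, whereas the paper cites \cite{StojnicCSetam09,StojnicHopBnds10}; you also correctly write $\bar{\h}_i^{\,2}$ where the displayed equation (\ref{eq:poshopaftlemma3l}) in the paper has a typographical omission of the square.
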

\begin{proof}
As was the case with the proof of Lemma \ref{lemma:uriclemma}, the proof of (\ref{eq:lricexplemma}) follows from (\ref{eq:poshopubexp1l}) and the above discussion. The proof of the moreover part follows from the concentration properties considered in \cite{StojnicCSetam09} and the corresponding discussion presented in \cite{StojnicHopBnds10}.
\end{proof}

\noindent \textbf{Remark:} Of course, the above lower bounds may occasionally fall below zero. In that case they would be trivially useless. However, instead of formally replacing them with zero when that happens we purposely leave them in the above form to emphasize their potential deficiency.

\subsection{Numerical results -- lower bound on $\xi_{lric}$}
\label{sec:xilprobnum}

Similarly to what we did in Section \ref{sec:xiuprobnum}, in this subsection we present a small collection of numerical results one can obtain based on Lemma \ref{lemma:lriclemma}. In Table \ref{tab:lrictab1} we essentially show the lower bounds on $\lim_{n\rightarrow\infty}\frac{E\xi_{lric}}{\sqrt{m}}$ one can obtain based on the above lemma. We refer to those bounds as $\xi_{lric}^{(l)}$. Also, to get a feeling how far off they could be from the optimal ones we show a set of known lower bounds from \cite{BT10} (alternatively one can also look at the results from e.g. \cite{CRT,CT,BCTsharp09} as well; the results from \cite{BT10} however provide higher values of the lower bounds). We also point out that, as was the case when we studied $\xi_{uric}$ in Section \ref{sec:xiu}, based on numerical experiments conducted in \cite{BT10}, the lower bounds presented there appeared as if not that far away from the optimal values. We denote the upper bounds on $\lim_{n\rightarrow\infty}\frac{E\xi_{lric}}{\sqrt{m}}$ that one can obtain based on \cite{BT10} as $\xi_{lric}^{BT}$. Also, as was the case in Section \ref{sec:xiuprobnum}, the values presented in Table \ref{tab:lrictab1} are slightly modified versions of the corresponding quantities from \cite{BT10}. Namely, to get a complete agreement with \cite{BT10} one should think of ${\cal L}$ in \cite{BT10} as $1-(\xi_{lric}^{BT})^2$ (or in other words, what we call $\xi_{lric}^{BT}$ in \cite{BT10} is called $\lambda^{min}$). Overall, the results obtained based on Lemma \ref{lemma:lriclemma} are not as good as those from \cite{BT10} in a wide range of values for $\beta$ and $\alpha$. In fact, as $\beta$ gets larger the lower bounds the above lemma provides become even negative. However, the bounds given in the above lemma are relatively simple and can be used for a quick assessment of $E\xi_{lric}$ when they are positive.

\begin{table}
\caption{Lower bounds on $\lim_{n\rightarrow\infty}\frac{E\xi_{lric}}{\sqrt{m}}$ -- low $\beta/\alpha\leq 0.5$ regime}\vspace{.1in}
\hspace{-0in}\centering
\begin{tabular}{||l|c|c|c|c|c||}\hline\hline
 $\hspace{.5in}\alpha$                        & $0.1$ & $0.3$ & $0.5$ & $0.7$ & $0.9$  \\ \hline\hline
 $\beta/\alpha=0.05$; $\xi_{lric}^{BT}$       & $0.4224$ & $0.4545$ & $0.4709$ & $0.4823$ & $0.4911$ \\ \hline
 $\beta/\alpha=0.05$; $\xi_{lric}^{(l)}$      & $0.3031$ & $0.3789$ & $0.4168$ & $0.4429$ & $0.4631$ \\ \hline\hline
 $\beta/\alpha=0.1$; $\xi_{lric}^{BT}$        & $0.2717$ & $0.3120$ & $0.3335$ & $0.3489$ & $0.3611$ \\ \hline
 $\beta/\alpha=0.1$; $\xi_{lric}^{(l)}$       & $0.0808$ & $0.1951$ & $0.2529$ & $0.2929$ & $0.3239$ \\ \hline\hline
 $\beta/\alpha=0.3$; $\xi_{lric}^{BT}$        & $0.0488$ & $0.0803$ & $0.1025$ & $0.1215$ & $0.1389$ \\ \hline
 $\beta/\alpha=0.3$; $\xi_{lric}^{(l)}$       & $-0.394$ & $-0.171$ & $-0.056$ & $0.0247$ & $0.0877$ \\ \hline\hline
 $\beta/\alpha=0.5$; $\xi_{lric}^{BT}$        & $0.0041$ & $0.0130$ & $0.0234$ & $0.0356$ & $0.0504$ \\ \hline
 $\beta/\alpha=0.5$; $\xi_{lric}^{(l)}$       & $-0.670$ & $-0.363$ & $-0.203$ & $-0.090$ & $-0.002$ \\ \hline\hline
\end{tabular}
\label{tab:lrictab1}
\end{table}

\section{Lifting $\xi_{lric}$'s bounds}
\label{sec:xillift}

In the previous section we adapted the method from Section \ref{sec:xiu} for estimating $\xi_{uric}$ attempting to get good estimates for $\xi_{uric}$. However, while the method from Section \ref{sec:xiu} is very powerful when it comes to providing upper bounds on $\xi_{uric}$ it is significantly less successful when it comes to obtaining lower bounds on $\xi_{lric}$. As could have been seen from the numerical results given in the previous section, not only are the lower bounds on $\xi_{lric}$ obtained there weaker than known ones, they fairly often end up being negative. In this section we will attempt to improve the mechanisms presented in the previous section. Namely, we will attempt to adapt the strategy of Section \ref{sec:xiulow} and use some of
the ideas we recently introduced in \cite{StojnicLiftStrSec13,StojnicMoreSophHopBnds10} to provide a substantial conceptual improvement in the bounds given in Section \ref{sec:xil}. It will turn out that the improvements won't be only conceptual. In other words, the methodology that we will present below will be capable of providing significantly better practical estimates for $E\xi_{lric}$.

\subsection{Probabilistic approach to lifting $\xi_{lric}$'s bounds}
\label{sec:xilliftprob}

As in Subsection \ref{sec:xiulowprob}, we start by introducing a lemma very similar to the one considered in \cite{StojnicMoreSophHopBnds10} (the lemma is essentially a direct consequence/application of Theorem $2$ from \cite{StojnicMoreSophHopBnds10} which of course was proved in \cite{Gordon85}).
\begin{lemma}
Let $A$ be an $m\times n$ matrix with i.i.d. standard normal components. Let $\g$ and $\h$ be $n\times 1$ and $m\times 1$ vectors, respectively, with i.i.d. standard normal components. Also, let $g$ be a standard normal random variable and let $c_3$ be a positive constant. Then
\begin{equation}
E(\max_{\x\in\Sric}\min_{\|\y\|_2=1}e^{-c_3(\y^T A\x + g)})\leq E(\max_{\x\in\Sric}\min_{\|\y\|_2=1}e^{-c_3(\g^T\y+\h^T\x)}).\label{eq:posexplemmallift}
\end{equation}\label{lemma:posexplemmallift}
\end{lemma}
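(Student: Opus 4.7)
The plan is to apply the min--max exponential form of Gordon's Gaussian comparison inequality (Theorem~2 of \cite{StojnicMoreSophHopBnds10}, ultimately \cite{Gordon85}) to the two centered Gaussian processes
\begin{equation*}
X_{\x,\y}:=\y^T A\x+g,\qquad Y_{\x,\y}:=\g^T\y+\h^T\x,
\end{equation*}
indexed by $\x\in\Sric$ and $\y$ ranging over the unit sphere in $R^m$. The structure mirrors the proof of Lemma~\ref{lemma:posexplemmalift} almost verbatim; the only substantive change is that the outer optimization over $\y$ is now a $\min$ rather than a $\max$, which forces the sign in the exponent to be $-c_3$ and moves the argument from Slepian's regime into Gordon's min--max regime.

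The first step is to verify the standard Gordon covariance hypotheses. Using $\|\x\|_2=\|\y\|_2=1$ one has the common marginal variance $E[X_{\x,\y}^2]=E[Y_{\x,\y}^2]=2$. For fixed $\x$ and varying $\y,\y'$ a direct computation gives
\begin{equation*}
E[(X_{\x,\y}-X_{\x,\y'})^2]=E[(Y_{\x,\y}-Y_{\x,\y'})^2]=2-2\y^T\y',
\end{equation*}
so the same-outer-index condition holds with equality. For $\x\neq\x'$ and arbitrary $\y,\y'$ the crucial calculation is
\begin{equation*}
E[(Y_{\x,\y}-Y_{\x',\y'})^2]-E[(X_{\x,\y}-X_{\x',\y'})^2]=2(1-\y^T\y')(1-\x^T\x')\geq 0,
\end{equation*}
where the nonnegativity follows from $\y^T\y'\leq 1$ and $\x^T\x'\leq 1$ on the unit sphere. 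These are exactly the hypotheses from which Gordon concludes $E[\min_{\x}\max_{\y}X_{\x,\y}]\geq E[\min_{\x}\max_{\y}Y_{\x,\y}]$. As in the author's remark, the specific combinatorial structure of $\Sric$ enters nowhere in these computations; only $\Sric\subset S^{n-1}$ is used.

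Having the covariance conditions, I would invoke Gordon's inequality in its stochastic-dominance (i.e., monotone-function) form: for any non-increasing $\phi$,
\begin{equation*}
E\bigl[\phi(\min_{\x\in\Sric}\max_{\|\y\|_2=1}X_{\x,\y})\bigr]\leq E\bigl[\phi(\min_{\x\in\Sric}\max_{\|\y\|_2=1}Y_{\x,\y})\bigr].
\end{equation*}
Specializing to $\phi(t)=e^{-c_3 t}$ (non-increasing for $c_3>0$) and then using monotonicity of $e^{-c_3\cdot}$ to push the exponential back inside the optimizations via the identities $\min_{\|\y\|_2=1}e^{-c_3 t(\y)}=e^{-c_3\max_{\|\y\|_2=1}t(\y)}$ and $\max_{\x}e^{-c_3 s(\x)}=e^{-c_3\min_{\x}s(\x)}$ yields exactly (\ref{eq:posexplemmallift}). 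The only genuine obstacle is sign/direction bookkeeping: Gordon's $\min\max$ orientation and the decreasing monotonicity of $e^{-c_3 t}$ each reverse an inequality, and the two reversals must cancel to land on the stated direction. Beyond that, no new idea over Lemma~\ref{lemma:posexplemmalift} is required.
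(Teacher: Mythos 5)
Your argument is correct and takes the same route as the paper: both invoke Gordon's Gaussian min--max comparison inequality (Theorem~2 of \cite{StojnicMoreSophHopBnds10}, ultimately from \cite{Gordon85}), applied to $X_{\x,\y}=\y^T A\x+g$ and $Y_{\x,\y}=\g^T\y+\h^T\x$ with $\x$ restricted to $\Sric$ and $\y$ on the unit sphere. The paper's proof simply cites the theorem and remarks that only the structure of $\Sric$ has changed, while you have additionally spelled out the covariance-increment verification and the stochastic-dominance/monotone-function step that converts the probability comparison into the stated exponential-moment inequality, which is precisely what the citation is silently relying on.
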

\begin{proof}
As mentioned above, the proof is a standard/direct application of Theorem $2$ from \cite{StojnicMoreSophHopBnds10} which was proved in \cite{Gordon85}. The only difference is the structure of $\Sric$ which changes nothing in the proof.
\end{proof}

Following what was done in \cite{StojnicMoreSophHopBnds10} one then has
\begin{equation}
E(\min_{\x\in\Sric}\|A\x\|_2)\geq
\frac{c_3}{2}-\frac{1}{c_3}\log(E(\max_{\x\in\Sric}(e^{-c_3\h^T\x})))
-\frac{1}{c_3}\log(E(\max_{\|\y\|_2=1}(e^{-c_3\g^T\y}))).\label{eq:chpos8lriclift}
\end{equation}
Let $c_3=c_3^{(s)}\sqrt{n}$ where $c_3^{(s)}$ is a constant independent of $n$. Then following further what we did in \cite{StojnicMoreSophHopBnds10} we have
\begin{equation}
\frac{E(\min_{\x\in\Sric}\|A\x\|_2)}{\sqrt{n}}
 \geq
\frac{c_3^{(s)}}{2}-\frac{1}{nc_3^{(s)}}\log(E(\max_{\x\in\Sric}(e^{-c_3^{(s)}\sqrt{n}\h^T\x})))
-\frac{1}{nc_3^{(s)}}\log(E(\max_{\|\y\|_2=1}(e^{-c_3^{(s)}\sqrt{n}\g^T\y}))),
\end{equation}
or written slightly differently
\begin{eqnarray}
\sqrt{\alpha}\frac{E(\min_{\x\in\Sric}\|A\x\|_2)}{\sqrt{m}}
& \geq &
\frac{c_3^{(s)}}{2}-\frac{1}{nc_3^{(s)}}\log(E(\max_{\x\in\Sric}(e^{-c_3^{(s)}\sqrt{n}\h^T\x})))
-\frac{1}{nc_3^{(s)}}\log(E(\max_{\|\y\|_2=1}(e^{-c_3^{(s)}\sqrt{n}\g^T\y}))) \nonumber \\
& = & -(-\frac{c_3^{(s)}}{2}+I_{lric}(c_3^{(s)},\beta)
+I_{sph}(c_3^{(s)},\alpha)),\label{eq:chpos9lriclift}
\end{eqnarray}
where
\begin{eqnarray}
I_{lric}(c_3^{(s)},\beta) & = & \frac{1}{nc_3^{(s)}}\log(E(\max_{\x\in\Sric}(e^{-c_3^{(s)}\sqrt{n}\h^T\x})))\nonumber \\
I_{sph}(c_3^{(s)},\alpha) & = & \frac{1}{nc_3^{(s)}}\log(E(\min_{\|\y\|_2=1}(e^{-c_3^{(s)}\sqrt{n}\g^T\y}))).\label{eq:defIsl}
\end{eqnarray}
In \cite{StojnicMoreSophHopBnds10} we also established the following
\begin{equation}
I_{sph}(c_3^{(s)},\alpha)=\frac{1}{nc_3^{(s)}}\log(Ee^{-c_3^{(s)}\sqrt{n}\|\g\|_2})
\doteq\widehat{\gamma^{(s)}}-\frac{\alpha}{2c_3^{(s)}}\log(1-\frac{c_3^{(s)}}{2\widehat{\gamma^{(s)}}})),\label{eq:gamaiden2l}
\end{equation}
where as in Section \ref{sec:xiulow} (and following \cite{SPH}) $\doteq$ stands for equality that holds as $n\rightarrow \infty$ and
\begin{equation}
\widehat{\gamma^{(s)}}=\frac{2c_3^{(s)}-\sqrt{4(c_3^{(s)})^2+16\alpha}}{8}.\label{eq:gamaiden3l}
\end{equation}
As in Section \ref{sec:xiulow}, we also mention that (as in \cite{StojnicMoreSophHopBnds10}) $\doteq$ can be replaced with a trivial inequality $\leq$ for our needs here.

Now, following what was done in Section \ref{sec:xiulow}, to make the bound in (\ref{eq:chpos9lriclift}) operational, the only thing left to consider is $I_{lric}(c_3^{(s)},\beta)$. One then trivially has
\begin{equation}
I_{lric}(c_3^{(s)},\beta)  =  \frac{1}{nc_3^{(s)}}\log(E(\max_{\x\in\Sric}(e^{-c_3^{(s)}\sqrt{n}\h^T\x}))) =  \frac{1}{nc_3^{(s)}}\log(E(\max_{\x\in\Sric}(e^{c_3^{(s)}\sqrt{n}\h^T\x}))).\label{eq:defIsl1}
\end{equation}
Comparing (\ref{eq:defIsl1}) and (\ref{eq:defIs}) one then has
\begin{equation}
I_{lric}(c_3^{(s)},\beta) = I_{uric}(c_3^{(s)},\beta).\label{eq:defIsl2}
\end{equation}
Moreover, one can then use (\ref{eq:defbigIuric})-(\ref{eq:defbigIuric1}) to characterize $I_{lric}(c_3^{(s)},\beta)$
which is then sufficient to compute lower bounds on $E\xi_{lric}$.

We summarize the above results related to the lower bound of $E\xi_{lric}$ in the following theorem.

\begin{theorem}($E\xi_{lric}$ - lifted lower bound)
Let $A$ be an $m\times n$ matrix with i.i.d. standard normal components. Let $k,m,n$ be large
and let $\alpha=\frac{m}{n}$ and $\beta=\frac{k}{n}$ be constants
independent of $m$ and $n$. Further, let $\Sric$ be as defined in (\ref{eq:defSric}) (or in (\ref{eq:defSriclift})). Let $\mbox{erf}$ be the standard error function associated with zero-mean unit variance Gaussian random variable and let $\mbox{erfc}=1-\mbox{erf}$.
Let
\begin{equation}
\widehat{\gamma_{sph}^{(s)}}=\frac{2c_3^{(s)}-\sqrt{4(c_3^{(s)})^2+16\alpha}}{8},\label{eq:gamasphthmlriclift}
\end{equation}
and
\begin{equation}
I_{sph}(c_3^{(s)},\alpha)=
\left ( \widehat{\gamma_{sph}^{(s)}}-\frac{\alpha}{2c_3^{(s)}}\log(1-\frac{c_3^{(s)}}{2\widehat{\gamma_{sph}^{(s)}}}\right ).\label{eq:Isphthmlriclift}
\end{equation}
Further, let $c_3^{(s)}$ and $\gamma_{lric}^{(s)}$ be such that $\frac{c_3^{(s)}}{\gamma_{lric}^{(s)}}<\frac{1}{2}$. Also,
let $I^{(uric)}$ be defined through (\ref{eq:defbigIuric})-(\ref{eq:defbigIuric1}) and let
\begin{equation}
I_{lric}(c_3^{(s)},\beta)=I_{uric}(c_3^{(s)},\beta)=\min_{\gamma_{uric}^{(s)}\geq c_3^{(s)}/2,\nu_{uric}^{(s)}\geq 0}(\nu_{uric}^{(s)}\beta+ \gamma_{uric}^{(s)}+\frac{1}{c_3^{(s)}}\log(I^{(uric)})).\label{eq:Ilricthmlric}
\end{equation}
Then
\begin{equation}
\lim_{n\rightarrow\infty}\frac{E\xi_{lric}}{\sqrt{m}}=\lim_{n\rightarrow\infty}\frac{E(\min_{\x\in\Sric}\|A\x\|_2)}{\sqrt{m}}
\geq
\frac{1}{\sqrt{\alpha}}\min_{c_3^{(s)}\geq 0}-\left (-\frac{c_3^{(s)}}{2}+I_{lric}(c_3^{(s)},\beta)+I_{sph}(c_3^{(s)},\alpha)\right ).\label{eq:ubthmlriclift}
\end{equation}
Moreover, let $\xi_{lric}^{(l,lift)}$ be a quantity such that
\begin{equation}
\frac{1}{\sqrt{\alpha}}\min_{c_3^{(s)}\geq 0}\left (-\frac{c_3^{(s)}}{2}+I_{lric}(c_3^{(s)},\beta)+I_{sph}(c_3^{(s)},\alpha)\right )
>\xi_{lric}^{(l,lift)}.
\label{eq:condlricthm}
\end{equation}
Then
\begin{eqnarray}
& & \lim_{n\rightarrow\infty}P(\min_{\x\in\Sric}(\|A\x\|_2)\geq \xi_{lric}^{(l,lift)}\sqrt{m})\geq 1\nonumber \\
& \Leftrightarrow & \lim_{n\rightarrow\infty}P(\xi_{lric}\geq \xi_{lric}^{(l,lift)}\sqrt{m})\geq 1 \nonumber \\
& \Leftrightarrow & \lim_{n\rightarrow\infty}P(\xi_{lric}^2\geq (\xi_{lric}^{(l,lift)})^2 m)\geq 1. \label{eq:lricprobthm}
\end{eqnarray}
\label{thm:thmlriclift}
\end{theorem}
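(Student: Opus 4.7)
The plan is to parallel the development in Section \ref{sec:xiulow} that produced Theorem \ref{thm:thmuriclift}, with the direction of the comparison flipped to yield a lower bound on a minimum rather than an upper bound on a maximum. Specifically, I would apply Lemma \ref{lemma:posexplemmallift}---the Gordon-type comparison inequality expressed through exponentials with a negative argument---which immediately provides a handle on $E\max_{\x\in\Sric}\min_{\|\y\|_2=1}e^{-c_3(\y^TA\x+g)}$. Taking logarithms, rearranging via $\min_\y(\cdot) = -\max_\y(-\cdot)$, and using Jensen's inequality to move $\log$ past $E$ in the correct direction, I arrive at the inequality (\ref{eq:chpos8lriclift}) relating $E\min_{\x\in\Sric}\|A\x\|_2$ to two simpler Gaussian expectations.

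Next, I would rescale with $c_3 = c_3^{(s)}\sqrt{n}$ to obtain the normalized decomposition into $I_{sph}(c_3^{(s)},\alpha)$ and $I_{lric}(c_3^{(s)},\beta)$. The spherical piece $I_{sph}$ is evaluated in closed form via the saddle-point/Laplace calculation of \cite{StojnicMoreSophHopBnds10}, as recorded in (\ref{eq:gamaiden2l})--(\ref{eq:gamaiden3l}); here the relevant root of the underlying quadratic carries the negative sign in front of the square root, as opposed to the upper-bound case, which is the natural consequence of having $-c_3^{(s)}\sqrt{n}\|\g\|_2$ in the exponent rather than $+c_3^{(s)}\sqrt{n}\|\g\|_2$. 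The key observation that lets the lower bound inherit essentially all the heavy lifting already done in Section \ref{sec:xiulow} is the symmetry $\h \stackrel{d}{=} -\h$, which gives
\begin{equation*}
E\max_{\x\in\Sric}e^{-c_3^{(s)}\sqrt{n}\h^T\x} = E\max_{\x\in\Sric}e^{c_3^{(s)}\sqrt{n}\h^T\x},
\end{equation*}
and hence $I_{lric}(c_3^{(s)},\beta) = I_{uric}(c_3^{(s)},\beta)$ as in (\ref{eq:defIsl2}). Consequently, the entire Lagrangian dualization (\ref{eq:uriceq01}), the characterization of $\t_i$ in (\ref{eq:deftiuriclift}), and the explicit integral evaluation culminating in (\ref{eq:defbigIuric1}) can be imported verbatim, yielding the claimed bound (\ref{eq:ubthmlriclift}).

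For the moreover (probabilistic) part, I would invoke standard Gaussian concentration of $\min_{\x\in\Sric}\|A\x\|_2$, which is $1$-Lipschitz as a function of the Gaussian matrix $A$; Borell--Sudakov--Tsirelson then gives exponential tail bounds that push the expectation-level inequality through to the probability-$1$ conclusion. This is exactly the mechanism used for the companion upper-bound Theorem \ref{thm:thmuriclift} and appeals to the concentration discussions in \cite{StojnicCSetam09, StojnicHopBnds10, StojnicMoreSophHopBnds10}.

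The only genuine obstacle I anticipate is careful sign bookkeeping: verifying that the direction of the Gordon-type comparison in Lemma \ref{lemma:posexplemmallift} is preserved after dividing by $-c_3<0$ subsequent to taking the logarithm, and that the negative root chosen in (\ref{eq:gamaiden3l}) is indeed the one forced by the saddle-point extremization of the relevant functional (rather than the positive root used in the upper-bound case). Modulo these routine but fiddly sign-tracking checks, the theorem reduces essentially to Theorem \ref{thm:thmuriclift} combined with the distributional symmetry $\h \stackrel{d}{=} -\h$.
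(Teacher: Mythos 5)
Your proposal matches the paper's proof essentially step for step: apply Lemma~\ref{lemma:posexplemmallift} (the negative-exponential Gordon comparison), rescale with $c_3=c_3^{(s)}\sqrt{n}$, use the saddle-point evaluation with the negative root for $I_{sph}$, and---the key shortcut---invoke the symmetry $\h\stackrel{d}{=}-\h$ to conclude $I_{lric}=I_{uric}$ so that the Lagrangian calculation and closed-form integral from Section~\ref{sec:xiulow} carry over verbatim, with the probabilistic ``moreover'' part resting on concentration as in \cite{StojnicCSetam09,StojnicHopBnds10,StojnicMoreSophHopBnds10}. The sign bookkeeping and the Jensen step you flag are precisely the only delicate points, and the paper disposes of them by referring to \cite{StojnicMoreSophHopBnds10} rather than spelling them out.
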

\begin{proof}
The first part follows from the above discussion. The moreover part follows from considerations presented in \cite{StojnicCSetam09,StojnicHopBnds10,StojnicMoreSophHopBnds10}.
\end{proof}

We will below present the results one can get using the above theorem. However, as we did in Section \ref{sec:xiulow}, before proceeding with the discussion of the results one can obtain through  Theorem \ref{thm:thmlriclift}, we also mention that the results presented in the previous section (essentially in Lemma \ref{lemma:lriclemma}) can in fact be deduced from the above theorem. Namely, in the limit $c_3^{(s)}\rightarrow 0$, one from (\ref{eq:chpos8lriclift}) has that $E\min_{\x\in\Sric}\h^T\x+\sqrt{\alpha n}$ can be used as an upper bound on $E\xi_{lric}$. This is of course exactly the same expression that was considered in the previous section. For the completeness we present the following corollary where we actually derive the results from the previous section as a special case of those given in the above theorem (of course, the special case actually assumes  $c_3^{(s)}\rightarrow 0$).

\begin{corollary}($E\xi_{lric}$ - upper bound)
Assume the setup of Theorem \ref{thm:thmlriclift}. Let $c_3^{(s)}\rightarrow 0$. Then
\begin{equation}
\widehat{\gamma_{sph}^{(s)}}\rightarrow -\frac{\sqrt{\alpha}}{2},\label{eq:gamasphcorlric}
\end{equation}
and
\begin{equation}
I_{sph}(c_3^{(s)},\alpha)\rightarrow
-\sqrt{\alpha}.\label{eq:Isphcorlric}
\end{equation}
Moreover, as in (\ref{eq:Iuriccoruric})
\begin{eqnarray}
\hspace{-.4in}I_{uric}(c_3^{(s)},\beta) & \rightarrow & \min_{\gamma_{uric}^{(s)},\nu\geq 0}
\left (\frac{\nu^2\beta}{4\gamma_{uric}^{(s)}}+\gamma_{sec}^{(s)}+\frac{(1-\nu^2)\mbox{erfc}(\nu/\sqrt{2})+
\frac{2}{\sqrt{\pi}}\frac{\nu}{\sqrt{2}}e^{-\frac{\nu^2}{2}}}{4\gamma_{sec}^{(s)}}\right )\nonumber \\
& = & \min_{\nu\geq 0}
\sqrt{\left (\beta\nu^2+
\mbox{erfc}(\frac{\nu}{\sqrt{2}})(1-\nu^2)+\frac{2\nu e^{-\frac{\nu^2}{2}}}{\sqrt{2\pi}})\right )}.\label{eq:Ilriccorlric}
\end{eqnarray}
Choosing $\nu=\sqrt{2}\mbox{erfinv}(1-\beta)$
one then has
\begin{equation}
\lim_{n\rightarrow \infty}\frac{E\xi_{lric}}{\sqrt{m}}=\frac{E(\min_{\x\in\Sric} \|A\x\|_2)}{\sqrt{m}} \geq 1-\frac{1}{\sqrt{\alpha}}\sqrt{\beta+\frac{2\mbox{erfinv}(1-\beta)}{\sqrt{\pi}e^{(\mbox{erfinv}(1-\beta))^2}}}.\label{eq:lricexpcor}
\end{equation}
\label{cor:corlric}
\end{corollary}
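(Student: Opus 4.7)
The plan is to mimic the proof of Corollary \ref{cor:coruric} almost verbatim, exploiting the identity $I_{lric}(c_3^{(s)},\beta)=I_{uric}(c_3^{(s)},\beta)$ established in (\ref{eq:defIsl2}). The only genuinely new calculation is on the spherical side, because in (\ref{eq:gamaiden3l}) the sign in front of the square root is flipped relative to (\ref{eq:gamasphthmuriclift}); carefully tracking these signs through to the final bound is what requires attention, but no new machinery is needed.

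First I would send $c_3^{(s)}\to 0$ in (\ref{eq:gamaiden3l}) to obtain $\widehat{\gamma_{sph}^{(s)}}\to -\sqrt{\alpha}/2$, which is (\ref{eq:gamasphcorlric}). Substituting this into (\ref{eq:gamaiden2l}) and Taylor expanding to first order,
\begin{equation*}
\log\!\left(1-\frac{c_3^{(s)}}{2\widehat{\gamma_{sph}^{(s)}}}\right)=\log\!\left(1+\frac{c_3^{(s)}}{\sqrt{\alpha}}\right)\approx \frac{c_3^{(s)}}{\sqrt{\alpha}},
\end{equation*}
produces $I_{sph}(c_3^{(s)},\alpha)\to -\sqrt{\alpha}/2-\sqrt{\alpha}/2=-\sqrt{\alpha}$, as claimed in (\ref{eq:Isphcorlric}). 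The sign is opposite to the upper-bound case of Corollary \ref{cor:coruric}, which is the key structural feature.

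Next, by (\ref{eq:defIsl2}) the $\h$-dependent piece $I_{lric}$ is literally equal to $I_{uric}$, so the small-$c_3^{(s)}$ asymptotic reduction carried out in (\ref{eq:defI1I2uriccoruric})--(\ref{eq:Iuriccoruric}) applies without modification and yields the representation (\ref{eq:Ilriccorlric}). Plugging the three limits into (\ref{eq:ubthmlriclift}) then gives
\begin{equation*}
\lim_{n\to\infty}\frac{E\xi_{lric}}{\sqrt{m}}\ \geq\ \frac{1}{\sqrt{\alpha}}\Bigl(-I_{uric}(0^{+},\beta)-(-\sqrt{\alpha})\Bigr)\ =\ 1-\frac{I_{uric}(0^{+},\beta)}{\sqrt{\alpha}}.
\end{equation*}

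Finally, evaluating the minimization in (\ref{eq:Iuriccoruric}) at the (optimal) choice $\nu=\sqrt{2}\mbox{erfinv}(1-\beta)$ gives, exactly as in the proof of Corollary \ref{cor:coruric}, $I_{uric}(0^{+},\beta)=\sqrt{\beta+2\,\mbox{erfinv}(1-\beta)/(\sqrt{\pi}e^{(\mbox{erfinv}(1-\beta))^{2}})}$, which, substituted into the display above, delivers (\ref{eq:lricexpcor}). The main obstacle is purely the careful sign-bookkeeping indicated in the first two paragraphs (the negative root in (\ref{eq:gamaiden3l}), the overall minus sign wrapping the bracket in (\ref{eq:ubthmlriclift}), and the matching Taylor-expansion sign in $I_{sph}$); once these are pinned down correctly, the rest is a direct transcription of the upper-bound corollary.
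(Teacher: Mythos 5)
Your proposal is correct and follows the same route as the paper: the paper simply refers to the proof of Corollary \ref{cor:coruric} with the understanding that one tracks the flipped sign in $\widehat{\gamma_{sph}^{(s)}}$ and the overall minus in (\ref{eq:ubthmlriclift}), uses the identity $I_{lric}=I_{uric}$, and evaluates at $\nu=\sqrt{2}\,\mbox{erfinv}(1-\beta)$. Your explicit sign-bookkeeping for $I_{sph}\to-\sqrt{\alpha}$ is exactly what that reference amounts to, so there is no material difference in approach.
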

\begin{proof}
Theorem \ref{thm:thmuriclift} holds for any $c_3^{(s)}\geq 0$. The above corollary instead of looking for the best possible $c_3^{(s)}$ in Theorem \ref{thm:thmuriclift} assumes a simple $c_3^{(s)}\rightarrow 0$ scenario. The rest of the proof follows the proof of Corollary \ref{cor:coruric}.

Alternatively, as mentioned above, one can look at $\frac{E\min_{\x\in\Sric}\h^T\x}{\sqrt{m}}+1$ and following the methodology presented in (\ref{eq:uriceq01}) (and originally in \cite{StojnicCSetam09}) obtain for a scalar $\nu=\sqrt{2}\mbox{erfinv}(1-\beta)$
\begin{equation}
\frac{E\min_{\x\in\Sric}\h^T\x}{\sqrt{m}} +1 \leq -\frac{1}{\sqrt{\alpha}}\sqrt{E_{\nu\leq |\h_i|}|\h_i|^2} +1.\label{eq:corlric0}
\end{equation}
Solving the integral (and using all the concentrating machinery of \cite{StojnicCSetam09}) one can write
\begin{equation}
\frac{E\min_{\x\in\Sric}\h^T\x}{\sqrt{m}}+1  \doteq - \frac{1}{\sqrt{\alpha}}\sqrt{\left (\int_{\nu\leq |\h_i|}|\h_i|^2\frac{e^{-\frac{\h_i^2}{2}}d\h_i}{\sqrt{2\pi}}\right )}+1= -\frac{1}{\sqrt{\alpha}} \sqrt{\left (\mbox{erfc}(\frac{\nu}{\sqrt{2}})+\frac{2\nu e^{-\frac{\nu^2}{2}}}{\sqrt{2\pi}}\right )}+1.\label{eq:corlric1}
\end{equation}
Connecting beginning and end in (\ref{eq:corlric1}) then leads to the condition given in the above corollary.
\end{proof}


\subsection{Numerical results -- lifted lower bound on $\xi_{lric}$}
\label{sec:xilprobnumlift}

In this subsection we present a small collection of numerical results one can obtain based on Theorem \ref{thm:thmlriclift}. In Table \ref{tab:lriclifttab1det} we show the upper bounds on $\lim_{n\rightarrow\infty}\frac{E\xi_{lric}}{\sqrt{m}}$ one can obtain based on Theorem \ref{thm:thmlriclift}. We refer to those bounds as $\xi_{lric}^{(l,lift)}$. Also, to get a feeling how the results of Theorem \ref{thm:thmuriclift} fare when compared to the ones presented in the previous section we in Table \ref{tab:lriclifttab1} also present the results we obtained in Subsection \ref{sec:xilprobnum} (which are of course based on Lemma \ref{lemma:lriclemma}  and Corollary \ref{cor:corlric}). For completeness, we in Table \ref{tab:lriclifttab1} also recall on the results from \cite{BT10}. Moreover, we show only what we call low $\beta/\alpha$ regime (i.e. $\beta/\alpha\leq 0.5$ regime). As $\beta/\alpha$ grows the values of bounds become smaller and their usefulness (as well as usefulness of original $E\xi_{lric}$ quantities) may not be of interest in such a regime.

\begin{table}
\caption{Lifted lower bounds on $\lim_{n\rightarrow\infty}\frac{E\xi_{lric}}{\sqrt{m}}$ -- low $\beta/\alpha\leq 0.5$ regime; optimized parameters}\vspace{.1in}
\hspace{-0in}\centering
\begin{tabular}{||l|c|c|c|c|c||}\hline\hline
 \hspace{.5in}$\alpha$                                                        & $0.1$ & $0.3$ & $0.5$ & $0.7$ & $0.9$  \\ \hline\hline
 $\beta/\alpha=0.05$; $c_{3}^{(s)}$                                           & $0.4592$ & $0.6653$ & $0.7756$ & $0.8494$ & $0.9027$ \\ \hline
 $\beta/\alpha=0.05$; $\nu_{lric}^{(s)}$                                      & $13.265$ & $7.1134$ & $5.2568$ & $4.2784$ & $3.6512$ \\ \hline
 $\beta/\alpha=0.05$; $\gamma_{lric}^{(s)}$                                   & $0.2399$ & $0.3546$ & $0.4195$ & $0.4654$ & $0.5006$ \\ \hline
 $\beta/\alpha=0.05$; $\xi_{lric}^{(l,lift)}$                                 & $0.4446$ & $0.4826$ & $0.5025$ & $0.5166$ & $0.5278$ \\ \hline\hline

 $\beta/\alpha=0.1$; $c_{3}^{(s)}$                                            & $0.7827$ & $1.0607$ & $1.1883$ & $1.2593$ & $1.2982$ \\ \hline
 $\beta/\alpha=0.1$; $\nu_{lric}^{(s)}$                                       & $7.5090$ & $4.1520$ & $3.0940$ & $2.5209$ & $2.1448$ \\ \hline
 $\beta/\alpha=0.1$; $\gamma_{lric}^{(s)}$                                    & $0.4017$ & $0.5545$ & $0.6310$ & $0.6790$ & $0.7110$ \\ \hline
 $\beta/\alpha=0.1$; $\xi_{lric}^{(l,lift)}$                                  & $0.2882$ & $0.3355$ & $0.3618$ & $0.3811$ & $0.3969$ \\ \hline\hline

 $\beta/\alpha=0.3$; $c_{3}^{(s)}$                                            & $4.0283$ & $3.8434$ & $3.5527$ & $3.2153$ & $2.8334$ \\ \hline
 $\beta/\alpha=0.3$; $\nu_{lric}^{(s)}$                                       & $1.5926$ & $1.1784$ & $0.9925$ & $0.8633$ & $0.7545$ \\ \hline
 $\beta/\alpha=0.3$; $\gamma_{lric}^{(s)}$                                    & $2.0184$ & $1.9363$ & $1.8042$ & $1.6528$ & $1.4850$ \\ \hline
 $\beta/\alpha=0.3$; $\xi_{lric}^{(l,lift)}$                                  & $0.0510$ & $0.0865$ & $0.1130$ & $0.1368$ & $0.1599$ \\ \hline\hline

 $\beta/\alpha=0.5$; $c_{3}^{(s)}$                                            & $37.468$ & $18.912$ & $12.497$ & $8.6351$ & $5.7138$ \\ \hline
 $\beta/\alpha=0.5$; $\nu_{lric}^{(s)}$                                       & $0.2144$ & $0.2928$ & $0.3337$ & $0.3570$ & $0.3593$ \\ \hline
 $\beta/\alpha=0.5$; $\gamma_{lric}^{(s)}$                                    & $18.735$ & $9.4602$ & $6.2593$ & $4.3411$ & $2.9056$ \\ \hline
 $\beta/\alpha=0.5$; $\xi_{lric}^{(l,lift)}$                                  & $0.0041$ & $0.0136$ & $0.0252$ & $0.0397$ & $0.0590$ \\ \hline\hline
\end{tabular}
\label{tab:lriclifttab1det}
\end{table}

As can be seen from the table, not only are the results from Theorem \ref{thm:thmlriclift} conceptually substantially better than the counterparts given in Lemma \ref{lemma:lriclemma}, they are also capable of offering substantial practical improvement over counterparts from Lemma \ref{lemma:lriclemma} (in fact, they also improve on the results from \cite{BT10}). Of course one then wonders how far from the optimal are the results that we presented. Well, as usual, there are certain obvious limitations and those relate to the numerical nature of the provided results. Namely, we, as in Section \ref{sec:xiulow}, solved the numerical optimizations that appear in Theorem \ref{thm:thmlriclift} only on a local optimum level and obviously only with a finite precision. We do not know if a substantial change would occur in the presented results had we solved them on a global optimum level (we recall that finding local optima is of course certainly enough to establish valid lower bounds; moreover in Table \ref{tab:lriclifttab1det} we provide a detailed values for optimizing parameters that we chose). As for our original question related to how far away from the true $E\xi_{lric}$ the results presented in Table \ref{tab:lriclifttab1} are, we actually believe that a unique answer is a bit hard to provide (it is highly likely that such an assessment may depend on the values $\beta$ and $\alpha$ take).

\begin{table}
\caption{Lifted lower bounds on $\lim_{n\rightarrow\infty}\frac{E\xi_{lric}}{\sqrt{m}}$ -- low $\beta/\alpha\leq 0.5$ regime}\vspace{.1in}
\hspace{-0in}\centering
\begin{tabular}{||l|c|c|c|c|c||}\hline\hline
 \hspace{1in}$\alpha$                                                   & $0.1$ & $0.3$ & $0.5$ & $0.7$ & $0.9$  \\ \hline\hline
 $\beta/\alpha=0.05$; $\xi_{lric}^{BT}$                                 & $0.4224$ & $0.4545$ & $0.4709$ & $0.4823$ & $0.4911$ \\ \hline
 $\beta/\alpha=0.05$; $\xi_{lric}^{(l)}$  ($c_3^{(s)}\rightarrow 0$)    & $0.3031$ & $0.3789$ & $0.4168$ & $0.4429$ & $0.4631$ \\ \hline
 $\beta/\alpha=0.05$; $\xi_{lric}^{(l,lift)}$  (optimized $c_3^{(s)}$)  & $0.4446$ & $0.4826$ & $0.5025$ & $0.5166$ & $0.5278$ \\ \hline\hline

 $\beta/\alpha=0.1$; $\xi_{lric}^{BT}$                                  & $0.2717$ & $0.3120$ & $0.3335$ & $0.3489$ & $0.3611$ \\ \hline
 $\beta/\alpha=0.1$; $\xi_{lric}^{(l)}$   ($c_3^{(s)}\rightarrow 0$)    & $0.0808$ & $0.1951$ & $0.2529$ & $0.2929$ & $0.3239$ \\ \hline
 $\beta/\alpha=0.1$; $\xi_{lric}^{(l,lift)}$   (optimized $c_3^{(s)}$)  & $0.2882$ & $0.3355$ & $0.3618$ & $0.3811$ & $0.3969$ \\ \hline\hline

 $\beta/\alpha=0.3$; $\xi_{lric}^{BT}$                                  & $0.0488$ & $0.0803$ & $0.1025$ & $0.1215$ & $0.1389$ \\ \hline
 $\beta/\alpha=0.3$; $\xi_{lric}^{(l)}$   $c_3^{(s)}\rightarrow 0$)     & $-0.394$ & $-0.171$ & $-0.056$ & $0.0247$ & $0.0877$ \\ \hline
 $\beta/\alpha=0.3$; $\xi_{lric}^{(l,lift)}$   (optimized $c_3^{(s)}$)  & $0.0510$ & $0.0865$ & $0.1130$ & $0.1368$ & $0.1599$ \\ \hline\hline

 $\beta/\alpha=0.5$; $\xi_{lric}^{BT}$                                  & $0.0041$ & $0.0130$ & $0.0234$ & $0.0356$ & $0.0504$ \\ \hline
 $\beta/\alpha=0.5$; $\xi_{lric}^{(l)}$   $c_3^{(s)}\rightarrow 0$)     & $-0.670$ & $-0.363$ & $-0.203$ & $-0.090$ & $-0.002$ \\ \hline
 $\beta/\alpha=0.5$; $\xi_{lric}^{(l,lift)}$   (optimized $c_3^{(s)}$)  & $0.0041$ & $0.0136$ & $0.0252$ & $0.0397$ & $0.0590$ \\ \hline\hline
\end{tabular}
\label{tab:lriclifttab1}
\end{table}

\section{Conclusion}
\label{sec:conc}

In this paper we looked at random matrices and studied their a particular property called restricted isometry. We developed a couple of mechanisms that can be utilized to estimate the values of the so-called isometry constants (quantities one typically associates with the isometry property).

To be a bit more specific, we designed a mechanism based on our recent results from \cite{StojnicCSetam09} that provides a fairly good set of estimates for the upper isometry constants. However, when adapted to cover the lower isometry constants it did not achieve the same success. We then went further and attempted to utilize some of the ideas we developed in \cite{StojnicMoreSophHopBnds10,StojnicLiftStrSec13} to lower the upper and to lift the lower isometry constants. The proposed methodology worked fairly well and the improvements we obtained (especially when it comes to the lower isometry constants were substantial). Moreover, in a wide range of problem parameters (dimensions) we feel confident that the results we obtained are actually fairly close to the exact ones.

As was the case in \cite{StojnicCSetam09,StojnicMoreSophHopBnds10,StojnicLiftStrSec13}, the purely theoretical results we presented are for the so-called Gaussian models, i.e. for systems with i.i.d. Gaussian coefficients. Such an assumption significantly simplified our exposition. However, all results that we presented can easily be extended to the case of many other models of randomness. There are many ways how this can be done. Instead of recalling on them here we refer to a brief discussion about it that we presented in \cite{StojnicMoreSophHopBnds10}.

As for usefulness of the presented results, there is hardly any limit. First, one can look at a host of related problems from the compressed sensing literature. Pretty much any problem that is typically attacked through the isometry constants can now be revisited. On a more mathematical side, isometry constants are tightly connected with the condition numbers of random matrices and the estimates we provided here will be of help when studying many problems where variants of condition numbers appear.

Also, on a purely mathematical side, one can observe that the isometry properties that we considered in this paper are based on $\ell_2/\ell_2$ isometries. Of course, one can define a tone of other isometries and for pretty much any of them the methods proposed here work (in fact for some of them they actually work even better than for those considered here). We will present some of these applications in a few forthcoming papers.

\begin{singlespace}
\bibliographystyle{plain}
\bibliography{RicBndsRefs}
\end{singlespace}

\end{document}